\newtheorem{thrm}{Theorem}[section]
\newtheorem{lem}[thrm]{Lemma}
\newtheorem{cor}[thrm]{Corollary}
\theoremstyle{definition}
\newtheorem{definition}[thrm]{Definition}
\newtheorem{remark}[thrm]{Remark}
\numberwithin{equation}{section}
\author{Hiroshi~Nozaki}
\address{
Graduate School of Information Sciences, \\
	Tohoku University \\
	Aoba-ku, Sendai 980-8579, \\
	Japan}
\email{nozaki@ims.is.tohoku.ac.jp}
\author{Masanori~Sawa}
\address{
Graduate School of Information Sciences, \\
	Nagoya University \\
	Chikusa-ku, Nagoya 464-8601, \\
	Japan}
\email{sawa@is.nagoya-u.ac.jp}
\thanks{This research started while
the authors were visiting scholars at the University of Texas at Brownsville.
This research is supported by
the Japan Society for the Promotion of Science.
}
\keywords{Cubature formula, Euclidean design,
radially symmetric integral, reflection group, Sobolev theorem}
\subjclass{Primary 65D32, Secondary 05E99, 51M99}
\begin{document}

\title[Note on cubature formulae and
designs obtained from group orbits]{Note on cubature formulae and
designs obtained from group orbits}

\begin{abstract}
In 1960,
Sobolev proved that for a finite reflection group $G$,
a $G$-invariant cubature formula is of degree $t$ if and only if
it is exact for all $G$-invariant polynomials of degree at most $t$.
In this paper,
we find some observations on invariant cubature formulas and Euclidean designs
in connection with the Sobolev theorem.
First, we give an alternative proof of
theorems by Xu (1998) on necessary and sufficient conditions
for the existence of cubature formulas with some strong symmetry.
The new proof is shorter and simpler compared to the original one by Xu, and
moreover gives a general interpretation of
the analytically-written conditions of Xu's theorems.
Second,
we extend a theorem by Neumaier and Seidel (1988) on
Euclidean designs to invariant Euclidean designs, and thereby
classify tight Euclidean designs obtained from
unions of the orbits of the corner vectors. 
This result generalizes a theorem of Bajnok (2007) which classifies
tight Euclidean designs invariant under the Weyl group of type $B$
to other finite reflection groups.
\end{abstract}
\maketitle

\section{Introduction} \label{sect1}

A main problem of numerical integration is to approximate the integral
\begin{equation*}
\int_\Omega f(x) d \mu.
\end{equation*}
Here $x$ is an $n$-dimensional coordinate vector and
$\mu$ is a probability
measure on a domain $\Omega$ in $\mathbb{R}^n$.
We search for an approximation formula by
taking a positive linear combination of
the function values of $f$ at specified points
$x_1,\cdots,x_N$, that is,
\begin{equation}
\label{eq:CF}
\sum_{i = 1}^N w_i f(x_i).
\end{equation}
We call (\ref{eq:CF}) a {\it cubature formula}.
The values $w_i$ are the {\it weights}
and $x_i$ are the {\it points}
of a cubature formula.
To each formula we assign the set of functions
for which it is exact.
Most often this set is the space of all polynomials of degree no more than 
$t$; in this case
a cubature formula is said to be {\it of degree $t$}.
We refer the readers to the comprehensive monograph~\cite{Dunkl-Xu,Stroud}
for the basic theory of cubature formula.

A fundamental objective is to construct
cubature formulas of large degrees with few points.
The requirement that a given cubature formula is exact for polynomials
up to a certain degree can be reduced to the problem of
solving a system of algebraic equations.
In general,
the larger the number of points or the degree of a cubature formula 
is,
the greater the size of this system is.
Sobolev~\cite{Sobolev} gave a celebrated criterion to diminish
the size of the system to be solved.
Namely, he proved that
an invariant cubature formula is of degree $t$
if and only if
it is exact for all polynomials of degree at most $t$
invariant under the group.
This is known as the Sobolev theorem.
The Sobolev theorem is widely accepted by
the cubature community in analysis and related areas;
for instance see~\cite{Mysovskikh,Salikhov}.

Independent of the line of research in analysis and related areas,
Goethals and Seidel~\cite[Theorem 3.12]{Goethals-Seidel}
developed the invariant theory of
Chebyshev-type cubature formulas on the sphere or spherical designs.
As a generalization of spherical designs
Neumaier and Seidel~\cite{Neumaier-Seidel} considered
cubature formulas on several concentric spheres
called Euclidean designs.
Bajnok~\cite{Bajnok} classified
tight Euclidean designs whose points are
the union of the orbits of the corner vectors
of the group $B_n$, and in particular, he obtained
several new tight designs.
Here a Euclidean design is tight if
it is minimal with respect to
a lower bound for the number of points.
To obtain the results,
Bajnok~\cite[Proposition 14]{Bajnok} essentially used the idea of the
Sobolev theorem for $B_n$-invariant Euclidean
designs, though he did not offer the name of Sobolev.
It seems that
some researchers in combinatorics and related areas do not fully
recognized the Sobolev theorem~\cite{Eiichi}.

In this paper we find some observations on invariant cubature formulas
in connection with the Sobolev theorem.
In Section 2 we explain Sobolev's invariant theory in detail.
We also explain some basic facts related to Euclidean designs, e.g.,
a theorem of Neumaier and Seidel~\cite{Neumaier-Seidel} which
is well known in algebra and combinatorics.
In Section 3 we give an alternative proof of
famous theorems by Xu~\cite[Theorem 1.1, Theorem 1.2]{Xu}
on necessary and sufficient conditions
for the existence of cubature formulas with radial symmetry.
The original proof by Xu requires some tedious calculations and
technical tools in numerical analysis like,
Gaussian-Lobatto quadrature, Gaussian-Radau quadrature.
Eventually it is long, and
researchers in other areas may not be familiar with his proof.
Whereas,
our new proof
is short and simple compared to the original proof.
Moreover it gives a general interpretation
of the analytically-written conditions of Xu's theorems, and so
will be readable and acceptable for researchers not only in analysis,
but also in other areas like algebra and combinatorics.
In Section 4
we extend the theorem of Neumaier and Seidel
to invariant Euclidean designs, and thereby
classify tight Euclidean designs obtained from
unions of the orbits of the corner vectors.
This classification generalizes the result of Bajnok
for other finite reflection groups.

\section{Preliminaries}
\label{sect2}

Let ${\rm Hom}_l(\mathbb{R}^n)$ be the linear space of
all real homogeneous polynomials of total degree $l$ in $n$ variables.
Let $\mathcal{P}_l(\mathbb{R}^n) = \sum_{i=0}^l
\text{Hom}_i(\mathbb{R}^n)$,
$\mathcal{P}_l^*(\mathbb{R}^n)
= \sum_{i=0}^{\lfloor l/2 \rfloor} {\rm Hom}_{l-2i}(\mathbb{R}^n)$.
We denote by
$\text{Harm}_l(\mathbb{R}^n)$
the subspace of $\mathcal{P}_l(\mathbb{R}^n)$ of
harmonic homogeneous polynomials of degree $l$.
Let $\mathcal{P}_l(A), \mathcal{P}_l^*(A)$ be the space of functions which
are the restrictions of the corresponding polynomials to
$A \subset \mathbb{R}^n$.

Let $G$ be a finite subgroup of the orthogonal group in $\mathbb{R}^n$ and
$f \in \mathcal{P}_l(\mathbb{R}^n)$.
We consider the action of $\sigma \in G$ on $f$ as follows:
\begin{align*}
(\sigma f) (x) = f(x^{\sigma^{-1}}), \qquad x \in \mathbb{R}^n.
\end{align*}
A polynomial $f$ is said to be {\it $G$-invariant} if it satisfies that
\begin{align*}
\sigma f = f,\qquad \forall \sigma \in G.
\end{align*}
We denote by $\mathcal{P}_l(\mathbb{R}^n)^G, {\rm Harm}_l(\mathbb{R}^n)^G$
the set of $G$-invariant polynomials in
$\mathcal{P}_l(\mathbb{R}^n)$,
${\rm Harm}_l(\mathbb{R}^n)$ respectively.

A cubature formula (\ref{eq:CF}) is said to be {\it invariant under $G$}, or
{\it $G$-invariant} if the domain $\Omega$ and measure $\mu$ of the integral
are invariant under $G$ and
the set of points is the union of $G$-orbits and
to each point of the same orbit an equal weight is assigned.
The following is known as the Sobolev theorem.

\begin{thrm}
[\cite{Sobolev}]
\label{thm:Sobolev}
With the above set up,
a $G$-invariant cubature formula is of degree $t$ if and only if
it is exact for every polynomial
$f \in \mathcal{P}_t(\mathbb{R}^n)^G$.
\end{thrm}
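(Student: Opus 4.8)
The plan is to prove the nontrivial implication: assuming the $G$-invariant formula is exact for every $f \in \mathcal{P}_t(\mathbb{R}^n)^G$, deduce exactness for every $f \in \mathcal{P}_t(\mathbb{R}^n)$; the converse is immediate since $\mathcal{P}_t(\mathbb{R}^n)^G \subseteq \mathcal{P}_t(\mathbb{R}^n)$. The one device needed is the averaging (Reynolds) operator: to a polynomial $f$ associate its $G$-average $\bar f = \frac{1}{|G|}\sum_{\sigma \in G} \sigma f$. First I would record the elementary facts that $\bar f \in \mathcal{P}_t(\mathbb{R}^n)^G$ whenever $f \in \mathcal{P}_t(\mathbb{R}^n)$: each $\sigma f$ is obtained from $f$ by the invertible linear substitution $x \mapsto x^{\sigma^{-1}}$, hence has the same total degree, so $\bar f$ has degree at most $t$; and $\tau \bar f = \bar f$ for every $\tau \in G$ because left translation by $\tau$ merely permutes the summands.

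Next I would show that neither side of the cubature identity changes when $f$ is replaced by $\bar f$. For the integral, $G$-invariance of $\Omega$ and $\mu$ gives, for each fixed $\sigma \in G$, $\int_\Omega (\sigma f)(x)\, d\mu(x) = \int_\Omega f(x^{\sigma^{-1}})\, d\mu(x) = \int_\Omega f(x)\, d\mu(x)$ via the change of variables $x \mapsto x^{\sigma}$; averaging over $\sigma$ yields $\int_\Omega \bar f \, d\mu = \int_\Omega f\, d\mu$. For the weighted point sum, write the point set as a disjoint union of $G$-orbits $O_1, \dots, O_m$, with a common weight $w^{(j)}$ on $O_j$ (this is exactly the definition of a $G$-invariant cubature formula recalled above). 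For fixed $\sigma$, the map $x \mapsto x^{\sigma^{-1}}$ permutes each $O_j$, so $\sum_i w_i (\sigma f)(x_i) = \sum_{j}\sum_{x \in O_j} w^{(j)} f(x^{\sigma^{-1}}) = \sum_{j}\sum_{x \in O_j} w^{(j)} f(x) = \sum_i w_i f(x_i)$; averaging over $\sigma$ gives $\sum_i w_i \bar f(x_i) = \sum_i w_i f(x_i)$.

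Putting these together finishes the argument: for any $f \in \mathcal{P}_t(\mathbb{R}^n)$ the polynomial $\bar f$ lies in $\mathcal{P}_t(\mathbb{R}^n)^G$, so exactness of the formula on $G$-invariants yields $\sum_i w_i f(x_i) = \sum_i w_i \bar f(x_i) = \int_\Omega \bar f\, d\mu = \int_\Omega f \, d\mu$, i.e. the formula is of degree $t$.

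I do not expect a genuine obstacle here; all the content is in the single observation that averaging over $G$ preserves both the true integral and the cubature sum, and the rest is bookkeeping with orbits and a linear change of variables. The only points deserving a line of care are that the substitution $x \mapsto x^{\sigma^{-1}}$ does not raise the total degree (so $\bar f$ stays in $\mathcal{P}_t(\mathbb{R}^n)$) and that the weights are genuinely constant along each orbit, which is precisely the hypothesis of $G$-invariance.
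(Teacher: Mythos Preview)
Your argument is correct and is the standard Reynolds-operator proof of Sobolev's theorem. Note, however, that the paper does not supply its own proof of this statement: Theorem~\ref{thm:Sobolev} is simply quoted from Sobolev's original paper~\cite{Sobolev} and used as a black box. So there is no ``paper's proof'' to compare against here.

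That said, the averaging device you employ is exactly the mechanism the authors use elsewhere: in the proof of Theorem~\ref{inv} they form $\varphi(\xi)=\frac{1}{|G|}\sum_{g\in G}f(\xi^{g})$ and exploit that the orbit sum of $f$ equals the orbit sum of $\varphi$. Your proof is essentially the cubature-formula version of that same computation, so it is fully in the spirit of the paper and would slot in without friction if a proof were desired.
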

\noindent
The Sobolev theorem is widely accepted by
the cubature community in analysis and related areas:
In particular Russian mathematicians in analysis have developed
the Sobolev theorem and employed it to construct many cubature formulas;
for instance see~\cite{Mysovskikh,Salikhov}.
Xu~\cite{Xu} presented beautiful theorems on the existence and structure
of cubature formulas for radially symmetric integrals.
In section 3 we review his theorems in detail and
give a short proof using the Sobolev theorem.

Next let us explain a combinatorial object
called Euclidean design which was
introduced by Neumaier and Seidel~\cite{Neumaier-Seidel}.
Let $X$ be a finite set in $\mathbb{R}^n$.
Let $r_1, r_2, \cdots, r_p$ be the norms of the vectors in $X$.
For $i= 1, 2, \cdots, p$
we denote by $S_i^{n-1}$ the sphere of radius $r_i$ centered at the origin, 
namely, $S_i^{n-1} = \{ x \in \mathbb{R}^n \mid \| x \| = r_i \}$, and
let $X_i = X \cap S_i^{n-1}$.
The set $X$ is said to be
{\it supported by $p$ concentric spheres $S=\bigcup_{i=1}^p S_i^{n-1}$}.
To each $S_i$ we assign the surface measure $\rho_i$.
Let $|S_i^{n-1}| = \int_{S_i^{n-1}}{\rm d}\rho_i(x)$,
with the convention that
$\frac{1}{|S_i^{n-1}|} \int_{S_i^{n-1}}
f(x) {\rm d} \rho_i (x) = f(0)$ if $S_i^{n-1} = \{ 0 \}$.
\begin{definition}
\label{def:Euclid}
With the same notations as in the above paragraph, we say
$X$ is a {\it Euclidean $t$-design supported by $S$}
if there exists a positive weight function $w(x)$ on $X$ such that
\begin{equation*}
\sum_{i = 1}^p \frac{\sum_{x \in X_i} w(x)}{|S_i^{n-1}|}
\int_{S_i^{n-1}} f(x) {\rm d} \rho_i (x) 
= \sum_{x \in X} w(x) f(x)
\end{equation*} 
for every polynomial $f \in \mathcal{P}_t(S)$. 
\end{definition}
\noindent
We can regard a Euclidean design as
a cubature formula on some concentric spheres.
Conversely a cubature formula for a class of integral with
some symmetry is a Euclidean design
(cf.~\cite[Lemma 3.1]{Hirao-Sawa}).
The following theorem by Neumaier and Seidel is
well known in algebra and combinatorics.
\begin{thrm}
[\cite{Neumaier-Seidel}]
With the same notations as in Definition~\ref{def:Euclid},
the following are equivalent:
\begin{enumerate}
\item[{\rm (i)}] $X$ is a Euclidean $t$-design with a weight function $w$.
\item[{\rm (ii)}] $\sum_{x \in X} w(x) f(x) = 0$ for
every $f \in ||x||^{2j} {\rm Harm}_l(\mathbb{R}^n)$
with $1 \le l \le t, 0 \le j \le \lfloor \frac{t-l}{2} \rfloor$.
\end{enumerate}
\label{thm:NS88}
\end{thrm}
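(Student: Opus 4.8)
The plan is to prove the equivalence of (i) and (ii) by a direct decomposition argument, reducing everything to the well-known fact that for the surface measure on a single sphere, the spherical integral of a harmonic polynomial of positive degree vanishes. First I would recall that every polynomial $f \in \mathcal{P}_t(S)$ can, on each sphere $S_i^{n-1}$, be written using the decomposition $\mathcal{P}_t(\mathbb{R}^n) = \bigoplus_{l} \bigoplus_{j} \|x\|^{2j}\,{\rm Harm}_l(\mathbb{R}^n)$ where $l$ runs over $0 \le l \le t$ and $j$ over $0 \le j \le \lfloor (t-l)/2 \rfloor$. By linearity it suffices to test the Euclidean design condition on elements of the form $g(x) = \|x\|^{2j} h(x)$ with $h \in {\rm Harm}_l(\mathbb{R}^n)$. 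The case $l = 0$ is automatic: both sides reduce to $\sum_{i} \frac{\sum_{x \in X_i} w(x)}{|S_i^{n-1}|} \cdot r_i^{2j} |S_i^{n-1}| = \sum_{x \in X} w(x)\|x\|^{2j}$, which is a genuine identity, so the design condition imposes no constraint there and (ii) correctly omits $l = 0$.

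Next I would compute the left-hand side of the design identity for $g(x) = \|x\|^{2j} h(x)$ with $l \ge 1$. On $S_i^{n-1}$ we have $\|x\|^{2j} = r_i^{2j}$, a constant, so
\begin{equation*}
\frac{1}{|S_i^{n-1}|}\int_{S_i^{n-1}} \|x\|^{2j} h(x)\,{\rm d}\rho_i(x)
= r_i^{2j}\cdot \frac{1}{|S_i^{n-1}|}\int_{S_i^{n-1}} h(x)\,{\rm d}\rho_i(x) = 0,
\end{equation*}
since $h$ is harmonic of degree $l \ge 1$ and the spherical mean of such a polynomial is zero (this is the standard mean-value property; for the degenerate sphere $\{0\}$ the convention gives $h(0) = 0$ as well). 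Hence for $l \ge 1$ the left-hand side of the design identity vanishes identically, and the design condition for $g$ becomes exactly $\sum_{x \in X} w(x) g(x) = 0$. This shows (i) $\Rightarrow$ (ii) directly: if $X$ is a Euclidean $t$-design, then for each admissible pair $(l,j)$ with $l \ge 1$ and each $h \in {\rm Harm}_l$, applying the design identity to $\|x\|^{2j} h$ gives $\sum_{x\in X} w(x)\|x\|^{2j}h(x) = 0$.

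For the converse (ii) $\Rightarrow$ (i), I would take an arbitrary $f \in \mathcal{P}_t(S)$, expand it via the harmonic decomposition as $f = \sum_{l,j} \|x\|^{2j} h_{l,j}$ with $h_{l,j} \in {\rm Harm}_l(\mathbb{R}^n)$, and check the design identity term by term: the $l = 0$ terms satisfy it as the genuine identity noted above, while for $l \ge 1$ the left-hand side is $0$ by the spherical mean computation and the right-hand side is $0$ by hypothesis (ii). Summing over all terms gives the design identity for $f$. The only point requiring a little care is that the harmonic decomposition of $\mathcal{P}_t(\mathbb{R}^n)$, restricted to the finite point set $X$ and to the spheres $S$, is still valid as a decomposition of functions — but since all the relevant identities are polynomial identities in $\mathbb{R}^n$ that we then evaluate at points of $X$ or integrate over spheres, no subtlety arises; the expansion holds as polynomials and everything specializes. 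I expect the main (though mild) obstacle to be bookkeeping the ranges of $l$ and $j$ so that the decomposition exactly matches the index set in (ii), and making sure the degenerate sphere $S_i^{n-1} = \{0\}$ is handled consistently with the stated convention.
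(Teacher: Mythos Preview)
The paper does not give its own proof of this theorem: it is stated with the citation \cite{Neumaier-Seidel} and used as a known background result, with no argument supplied. So there is nothing in the paper to compare your proposal against.

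That said, your proposal is correct and is essentially the standard proof. The two ingredients you isolate --- the harmonic decomposition $\mathcal{P}_t(\mathbb{R}^n)=\bigoplus_{0\le l\le t}\bigoplus_{0\le j\le \lfloor (t-l)/2\rfloor}\|x\|^{2j}\,{\rm Harm}_l(\mathbb{R}^n)$ and the vanishing of the spherical mean of a harmonic polynomial of positive degree --- are exactly what is needed, and your handling of the $l=0$ terms (where the design identity is a tautology) and of the degenerate sphere $\{0\}$ via the stated convention is accurate. The bookkeeping you flag is routine: since the decomposition is a polynomial identity on all of $\mathbb{R}^n$, evaluation at points of $X$ and integration over each $S_i^{n-1}$ commute with it, and the index ranges in (ii) match those of the decomposition once $l=0$ is removed.
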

\noindent
In Section 4 we give a stronger theorem than Theorem \ref{thm:NS88} 
for invariant Euclidean designs, especially for
researchers in algebra and combinatorics. 

Define $p'=p-\varepsilon_{S}$, where
$\varepsilon_{S} = 1$ if $0 \in S$, and $\varepsilon_{S} = 0$ otherwise.
The dimensions of $\mathcal{P}_l(S)$ and $\mathcal{P}_l^{\ast}(S)$ are well known.
\begin{thrm}[\cite{
Delsarte-Seidel,Erdelyi}] 
Let $S\subset \mathbb{R}^n$. 
\begin{enumerate}
\item 
$
\dim\mathcal{P}_l(S)=
\left\{\begin{array}{ll}
\varepsilon_{S} + \sum_{i=0}^{2p'-1} \binom{n+l-i-1}{n-1} &
\qquad \text{if $l \geq 2p'$}, \\
\dim \mathcal{P}_l(\mathbb{R}^n)=\binom{n+l}{l} &
\qquad \text{if $l \leq 2p'-1$.} 
\end{array} \right.
$

\item 
$
\dim \mathcal{P}_l^{\ast}(S)=
\left\{\begin{array}{ll}
\varepsilon_{S}+\sum_{i=0}^{p'-1} \binom{n+l-2i-1}{n-1} &
\qquad \text{if $l$ is even, $l\geq 2p'$},\\
\sum_{i=0}^{p'-1} \binom{n+l-2i-1}{n-1} &
\qquad \text{if $l$ is odd, $l\geq 2p'$},\\
\dim \mathcal{P}_l^{\ast}(\mathbb{R}^n)
=\sum_{i=0}^{\lfloor \frac{l}{2} \rfloor} \binom{n+l-2i-1}{n-1} &
\qquad \text{if $l \leq 2p'-1$}.
\end{array} \right.
$
\end{enumerate}
\end{thrm}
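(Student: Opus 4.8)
The plan is to reduce everything to the kernels of the two restriction maps $\rho\colon\mathcal{P}_l(\mathbb{R}^n)\twoheadrightarrow\mathcal{P}_l(S)$ and $\rho^{*}\colon\mathcal{P}_l^{*}(\mathbb{R}^n)\twoheadrightarrow\mathcal{P}_l^{*}(S)$, which are surjective by the very definition of $\mathcal{P}_l(S)$ and $\mathcal{P}_l^{*}(S)$; thus $\dim\mathcal{P}_l(S)=\binom{n+l}{n}-\dim\ker\rho$ and $\dim\mathcal{P}_l^{*}(S)=\dim\mathcal{P}_l^{*}(\mathbb{R}^n)-\dim\ker\rho^{*}$, where $\ker\rho$ and $\ker\rho^{*}$ consist of the polynomials of the respective type vanishing identically on $S$. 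Let $r_1,\dots,r_{p'}$ be the distinct \emph{positive} radii occurring in $S$ and put $h(x)=\prod_{i=1}^{p'}(\|x\|^2-r_i^2)$, a polynomial of degree $2p'$ with $h(0)\neq0$ all of whose homogeneous components have even degree, so $h\in\mathcal{P}_{2p'}^{*}(\mathbb{R}^n)$. The core step is the lemma that $f\in\mathbb{R}[x_1,\dots,x_n]$ vanishes on $\bigcup_{i=1}^{p'}S_i^{n-1}$ if and only if $h\mid f$. To prove it I would first show that a polynomial vanishing on a single sphere $\{\|x\|=r\}$ with $r>0$ is divisible by $\|x\|^2-r^2$: divide $f$ by $\|x\|^2-r^2$ viewed as a monic quadratic in $x_n$, obtaining a remainder $a(x')x_n+b(x')$, then evaluate at $x_n=\pm\sqrt{r^2-\|x'\|^2}$ over the ball $\|x'\|<r$ to force $a\equiv b\equiv0$ (for $n=1$ this is immediate from $(x-r)(x+r)=x^2-r^2$). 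Since the polynomials $\|x\|^2-r_i^2$ are pairwise non-associate irreducibles over $\mathbb{R}$ for $n\ge2$ (their common homogenization $x_1^2+\dots+x_n^2-r^2x_0^2$ has rank $n+1\ge3$, hence is irreducible), and $\mathbb{R}[x_1,\dots,x_n]$ is a unique factorization domain, divisibility by each of them forces divisibility by $h$.

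Granting the lemma, $f\in\mathcal{P}_l(\mathbb{R}^n)$ lies in $\ker\rho$ exactly when $h\mid f$ and, if $0\in S$, also $f(0)=0$; writing $f=hg$ with $\deg g\le l-2p'$ and using $h(0)\neq0$, the origin condition becomes $g(0)=0$. Hence $\ker\rho=h\cdot V_l$, where $V_l=\mathcal{P}_{l-2p'}(\mathbb{R}^n)$ if $0\notin S$ and $V_l=\{g\in\mathcal{P}_{l-2p'}(\mathbb{R}^n):g(0)=0\}$ if $0\in S$; since multiplication by $h$ is injective and, for $l\ge2p'$, the functional $g\mapsto g(0)$ is nonzero on $\mathcal{P}_{l-2p'}(\mathbb{R}^n)$, we get $\dim\ker\rho=\dim\mathcal{P}_{l-2p'}(\mathbb{R}^n)-\varepsilon_S$ for $l\ge2p'$, while $\ker\rho=0$ for $l\le2p'-1$ because then $\deg h=2p'>l$. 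The starred case runs along the same lines once one records $\mathcal{P}_k^{*}(\mathbb{R}^n)=\{u:\deg u\le k,\ u(-x)=(-1)^ku(x)\}$: if $f\in\mathcal{P}_l^{*}(\mathbb{R}^n)$ and $f=hg$ then $g\in\mathcal{P}_{l-2p'}^{*}(\mathbb{R}^n)$ because $h\in\mathcal{P}_{2p'}^{*}(\mathbb{R}^n)$, and when $l$ is odd every $u\in\mathcal{P}_l^{*}(\mathbb{R}^n)$ automatically satisfies $u(0)=0$, so the origin contributes a condition only for $l$ even; consequently $\dim\ker\rho^{*}=\dim\mathcal{P}_{l-2p'}^{*}(\mathbb{R}^n)-\varepsilon_S$ for $l\ge2p'$ even, $\dim\mathcal{P}_{l-2p'}^{*}(\mathbb{R}^n)$ for $l\ge2p'$ odd, and $0$ for $l\le2p'-1$. (One could instead argue via the harmonic decomposition ${\rm Hom}_k(\mathbb{R}^n)=\bigoplus_j\|x\|^{2j}{\rm Harm}_{k-2j}(\mathbb{R}^n)$, using that $\|x\|^2$ is constant on each $S_i^{n-1}$ and that spherical harmonics of distinct degrees are linearly independent on a sphere of positive radius, but the divisibility argument is shorter.)

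It then remains to substitute $\dim\mathcal{P}_l(\mathbb{R}^n)=\binom{n+l}{n}$ and $\dim\mathcal{P}_l^{*}(\mathbb{R}^n)=\sum_{i=0}^{\lfloor l/2\rfloor}\binom{n+l-2i-1}{n-1}$ and rewrite the resulting differences. For $l\ge2p'$ one invokes the telescoping identity $\binom{n+l}{n}-\binom{n+l-2p'}{n}=\sum_{i=0}^{2p'-1}\binom{n+l-i-1}{n-1}$ (iterated Pascal's rule) and, after shifting the index by $p'$, its even-step analogue $\dim\mathcal{P}_l^{*}(\mathbb{R}^n)-\dim\mathcal{P}_{l-2p'}^{*}(\mathbb{R}^n)=\sum_{i=0}^{p'-1}\binom{n+l-2i-1}{n-1}$; restoring the term $\varepsilon_S$ where it occurs then reproduces the stated formulas, while for $l\le2p'-1$ one simply gets $\dim\mathcal{P}_l(S)=\dim\mathcal{P}_l(\mathbb{R}^n)$ and $\dim\mathcal{P}_l^{*}(S)=\dim\mathcal{P}_l^{*}(\mathbb{R}^n)$. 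I expect the only genuinely delicate point to be the vanishing lemma — the irreducibility of $\|x\|^2-r^2$ and the boundary cases $n=1$ and $p'=0$ (in which $S=\{0\}$ and $h=1$) — whereas the remaining steps, including tracking each $\varepsilon_S$ and the parity distinction, are routine but must be carried out with some care.
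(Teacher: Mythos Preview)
The paper does not prove this theorem; it is simply stated with a citation to Delsarte--Seidel and Erd\'elyi as a known background result, so there is no in-paper argument to compare against. Your proof is correct and is in fact the standard one: identify $\ker\rho$ (resp.\ $\ker\rho^{*}$) with $h\cdot\mathcal{P}_{l-2p'}(\mathbb{R}^n)$ (resp.\ $h\cdot\mathcal{P}_{l-2p'}^{*}(\mathbb{R}^n)$), possibly cut down by the condition $g(0)=0$, and then telescope the binomial coefficients. The only place worth tightening is the $n=1$ case in your divisibility lemma: there $\|x\|^2-r_i^2=(x-r_i)(x+r_i)$ is not irreducible, so the UFD argument should be phrased in terms of pairwise coprimality of the $\|x\|^2-r_i^2$ (which holds for distinct $r_i>0$ in every dimension) rather than irreducibility. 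Everything else---the parity bookkeeping in $\mathcal{P}_l^{*}$, the handling of $\varepsilon_S$, and the degenerate case $p'=0$---is fine as written.
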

The following lower bounds are known as the Fisher-type
inequality for the size of a Euclidean design~\cite{Bannai-Bannai-Hirao-Sawa,
Delsarte-Seidel, Moller2, Mysovskikh2};
the latter one is also called the M\"oller bound.
\begin{thrm} \label{ft_bound}
\begin{enumerate}
\item Let $X$ be a Euclidean $2e$-design supported by $S$. Then, 
\[
|X| \geq \dim \mathcal{P}_e(S).
\]
\item Let $X$ be a Euclidean $(2e-1)$-design supported by $S$. Then, 
\[
|X| \geq
\left\{\begin{array}{ll}
2 \dim \mathcal{P}_{e-1}^{\ast}(S)-1 &
\qquad \text{if $e$ is odd and $0\in X$}, \\
2 \dim\mathcal{P}_{e-1}^{\ast}(S) &
\qquad \text{otherwise}.
\end{array} \right.
\]
\end{enumerate}
\end{thrm}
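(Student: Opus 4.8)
The plan is to treat both parts as rank estimates for evaluation maps into $\mathbb{R}^{X}$, exploiting that a Euclidean design carries a \emph{positive} weight function $w$. Put $W_{i}=\sum_{x\in X_{i}}w(x)$, which is positive since each $S_{i}^{n-1}$ meets $X$, and set $L(f)=\sum_{i=1}^{p}\frac{W_{i}}{|S_{i}^{n-1}|}\int_{S_{i}^{n-1}}f\,\mathrm{d}\rho_{i}$; by Definition~\ref{def:Euclid}, $L(g)=\sum_{x\in X}w(x)g(x)$ for every $g\in\mathcal{P}_{t}(S)$ when $X$ is a Euclidean $t$-design. Two features of $L$ carry the argument: it is \emph{positive}, that is $L(g^{2})=0$ forces $g|_{S}\equiv0$ (with the convention $\frac{1}{|S_{i}^{n-1}|}\int_{S_{i}^{n-1}}g\,\mathrm{d}\rho_{i}=g(0)$ for $S_{i}^{n-1}=\{0\}$); and it is \emph{invariant under $x\mapsto-x$}, since each surface measure $\rho_{i}$ is. On $\mathbb{R}^{X}$ I use the weighted form $\langle u,v\rangle=\sum_{x\in X}w(x)u(x)v(x)$, which is positive definite because $w>0$ and which satisfies $\langle\mathrm{ev}(g),\mathrm{ev}(h)\rangle=L(gh)$ whenever $gh\in\mathcal{P}_{t}(S)$, where $\mathrm{ev}$ denotes evaluation at the points of $X$.

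For part~(1) I would prove that $\mathrm{ev}\colon\mathcal{P}_{e}(S)\to\mathbb{R}^{X}$ is injective: if $f\in\mathcal{P}_{e}(S)$ vanishes on $X$ then $f^{2}\in\mathcal{P}_{2e}(S)$, so $0=\sum_{x\in X}w(x)f(x)^{2}=L(f^{2})$, and positivity of $L$ gives $f=0$ in $\mathcal{P}_{e}(S)$; hence $|X|=\dim\mathbb{R}^{X}\ge\dim\mathcal{P}_{e}(S)$. The same square-and-evaluate argument shows that for a Euclidean $(2e-1)$-design $\mathrm{ev}$ is injective on $\mathcal{P}_{e-1}^{\ast}(S)$, since there $f^{2}\in\mathcal{P}_{2e-2}(S)\subseteq\mathcal{P}_{2e-1}(S)$; this already yields $|X|\ge\dim\mathcal{P}_{e-1}^{\ast}(S)=:m$ and is the first half of part~(2).

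The doubling in part~(2) is where the central symmetry of $L$ enters. Fix a linear form $\ell$ and consider $A=\mathrm{ev}(\mathcal{P}_{e-1}^{\ast}(S))$ and $B=\mathrm{ev}(\ell\cdot\mathcal{P}_{e-1}^{\ast}(S))$ inside $\mathbb{R}^{X}$. For $f,h\in\mathcal{P}_{e-1}^{\ast}(S)$, a polynomial representing $f\ell h$ has all its homogeneous components of degree $\equiv(e-1)+1+(e-1)\equiv1\pmod2$ and total degree at most $2e-1$; thus $f\ell h\in\mathcal{P}_{2e-1}(S)$ is an odd polynomial, $L(f\ell h)=0$, and therefore $\langle\mathrm{ev}(f),\mathrm{ev}(\ell h)\rangle=L(f\ell h)=0$. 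Hence $A\perp B$, so $A\cap B=\{0\}$ by positive-definiteness, and $|X|\ge\dim A+\dim B=m+\dim B$. It remains to choose $\ell$. If $0\notin X$, pick $\ell$ off the finitely many hyperplanes $\{\ell:\ell(x)=0\}$, $x\in X$; then $(\ell h)|_{X}=0$ forces $h|_{X}=0$, hence $h=0$ by injectivity, so $\dim B=m$ and $|X|\ge2m$. If $0\in X$, then $\ell(0)=0$ is unavoidable, but picking $\ell$ off the hyperplanes of the \emph{nonzero} nodes gives $(\ell h)|_{X}=0\iff h|_{X\setminus\{0\}}=0$; the restriction map $\mathcal{P}_{e-1}^{\ast}(S)\to\mathbb{R}^{X\setminus\{0\}}$ differs from the injective $\mathrm{ev}$ by forgetting one coordinate, so its kernel has dimension at most $1$, giving $\dim B\ge m-1$ and $|X|\ge2m-1$. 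Finally, if in addition $e$ is even, every element of $\mathcal{P}_{e-1}^{\ast}(S)$ has only odd-degree homogeneous parts and hence vanishes at $0$, so that restriction map is already injective and one recovers $|X|\ge2m$---which is exactly why the correction $-1$ is needed only when $e$ is odd and $0\in X$.

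The individual steps are short, and I expect the only genuinely delicate point to be the bookkeeping at the origin: under the convention for $S_{i}^{n-1}=\{0\}$ one must check that the positivity argument still produces $f(0)=0$, and one must track the parity of $\mathcal{P}_{e-1}^{\ast}(S)$ against that of $\mathcal{P}_{e}(S)$, since this parity is precisely what creates the exceptional case in part~(2). The only caveat in the choice of $\ell$ is that multiplication by $\ell$ must remain injective on functions on $S$, which holds as soon as $S$ contains a sphere of positive radius; the degenerate case $S=\{0\}$ is trivial and consistent with the stated bounds.
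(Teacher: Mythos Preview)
The paper does not give its own proof of Theorem~\ref{ft_bound}: it is quoted as a known Fisher-type/M\"oller bound, with references to Delsarte--Seidel, M\"oller, Mysovskikh, and Bannai--Bannai--Hirao--Sawa, and is used only as background for the definition of tightness. So there is no in-paper argument to compare against.

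Your proposal is correct and is essentially the classical proof found in those references. Part~(1) is the standard positive-definiteness argument: the design identity turns the weighted evaluation inner product on $\mathbb{R}^{X}$ into the integral inner product $L(fg)$ on $\mathcal{P}_{e}(S)$, and positivity of $L$ forces $\mathrm{ev}$ to be injective. Part~(2) is M\"oller's refinement: the parity structure of $\mathcal{P}_{e-1}^{\ast}$ together with the central symmetry of $L$ makes $\mathrm{ev}(\mathcal{P}_{e-1}^{\ast}(S))$ and $\mathrm{ev}(\ell\cdot\mathcal{P}_{e-1}^{\ast}(S))$ orthogonal in $\mathbb{R}^{X}$, and a generic choice of the linear form $\ell$ controls $\dim B$ up to the inevitable loss of one at the origin. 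Your handling of the exceptional case---noting that for even $e$ every element of $\mathcal{P}_{e-1}^{\ast}$ is odd and hence vanishes at $0$, so the ``$-1$'' correction is only needed when $e$ is odd and $0\in X$---is exactly the standard bookkeeping. The caveats you flag (positivity at the degenerate sphere $\{0\}$ via the paper's convention, and the trivial case $S=\{0\}$) are real but routine, and you have identified them correctly.
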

\noindent
A Euclidean $t$-design is said to be
{\it tight}
if the equality holds in one of the bounds in Theorem \ref{ft_bound}. 

Hereafter we assume
$G$ is a finite irreducible reflection group in $\mathbb{R}^{n}$.
It is known that finite irreducible reflection
groups are classified completely \cite{Bourbaki}.
Let integers $1= m_1 \leq 
m_2
\leq \cdots \leq m_{n}$ be the exponents of $G$
(see \cite[Ch.V, $\S 6$ ]{Bourbaki}). 
\begin{thrm}[\cite{Goethals-Seidel}] \label{dim_inv}
Let $G$ be a finite irreducible reflection group. Let $q_i$ be the 
dimension of ${\rm Harm}_i(\mathbb{R}^n)^G$.
Then,  
\[
\sum_{i=0}^{\infty}q_i\lambda^i = \prod^{n}_{i=2} 
\frac{1}{1-\lambda^{1+m_i}}.
\]
\end{thrm}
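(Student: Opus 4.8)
The plan is to reduce the claimed generating-function identity to a statement about the algebra of $G$-invariants together with the classical fact that the harmonic polynomials furnish a copy of the regular (or, in the invariant setting, the trivial-isotypic part paired off against the coinvariant algebra) representation. First I would recall the Chevalley–Shephard–Todd theorem: for a finite irreducible reflection group $G$ in $\mathbb{R}^n$, the invariant ring $\mathbb{R}[x_1,\dots,x_n]^G$ is a polynomial algebra on homogeneous generators of degrees $d_i = 1 + m_i$ for $i=1,\dots,n$, where the $m_i$ are the exponents (and $d_1 = 1+m_1 = 2$, etc.). Consequently the Hilbert series of $\mathbb{R}[x]^G$ is $\prod_{i=1}^n (1-\lambda^{1+m_i})^{-1}$. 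Note $m_1 = 1$, so the factor for $i=1$ is $(1-\lambda^2)^{-1}$, which is exactly the Hilbert series of $\mathbb{R}[\|x\|^2] = \mathbb{R}[x_1^2+\cdots+x_n^2]$; this is the $G$-invariant ``radial'' direction, and it is the factor that must be stripped off to land on $\prod_{i=2}^n(1-\lambda^{1+m_i})^{-1}$.

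Next I would use the harmonic decomposition. For any finite subgroup $G$ of $O(n)$ one has the $G$-equivariant direct sum decomposition $\mathbb{R}[x_1,\dots,x_n] = \bigoplus_{j\ge 0} \|x\|^{2j}\,\mathrm{Harm}(\mathbb{R}^n)$ graded by total degree, i.e. $\mathcal{P}_l(\mathbb{R}^n) = \bigoplus_{j\ge 0}\|x\|^{2j}\,\mathrm{Harm}_{l-2j}(\mathbb{R}^n)$. Taking $G$-invariants commutes with this (since multiplication by the $G$-invariant $\|x\|^{2j}$ is an isomorphism of $G$-modules $\mathrm{Harm}_{l-2j}\to \|x\|^{2j}\mathrm{Harm}_{l-2j}$), so $\dim\mathcal{P}_l(\mathbb{R}^n)^G = \sum_{j\ge0}\dim\mathrm{Harm}_{l-2j}(\mathbb{R}^n)^G = \sum_{j\ge 0} q_{l-2j}$. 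Passing to generating functions, $\sum_{l\ge0}\dim\mathcal{P}_l(\mathbb{R}^n)^G\,\lambda^l = \bigl(\sum_{j\ge0}\lambda^{2j}\bigr)\bigl(\sum_{i\ge0}q_i\lambda^i\bigr) = \dfrac{1}{1-\lambda^2}\sum_{i\ge0}q_i\lambda^i$. But $\sum_{l\ge0}\dim\mathcal{P}_l(\mathbb{R}^n)^G\,\lambda^l$ is exactly the Hilbert series of the invariant ring $\mathbb{R}[x]^G$, which by Chevalley equals $\prod_{i=1}^n(1-\lambda^{1+m_i})^{-1} = \dfrac{1}{1-\lambda^2}\prod_{i=2}^n(1-\lambda^{1+m_i})^{-1}$ (using $m_1=1$).

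Equating the two expressions and cancelling the common factor $(1-\lambda^2)^{-1}$ gives $\sum_{i\ge0}q_i\lambda^i = \prod_{i=2}^n(1-\lambda^{1+m_i})^{-1}$, which is the assertion. The only genuine input beyond bookkeeping is the Chevalley–Shephard–Todd structure theorem for reflection groups (equivalently, the explicit Hilbert series of the invariant ring), so I expect that to be the "main obstacle" only in the sense of being the one nontrivial black box; everything else — the harmonic decomposition, its $G$-equivariance, and the generating-function manipulation — is routine. If one prefers to avoid invoking Chevalley directly, an alternative is to recall that $\mathrm{Harm}(\mathbb{R}^n)$ as a graded $G$-module is isomorphic to the coinvariant algebra $\mathbb{R}[x]/(\mathbb{R}[x]^G_+)$, whose graded multiplicity of the trivial representation has generating function $\prod_{i=1}^n \frac{1-\lambda^{1+m_i}}{1-\lambda}\cdot(\text{trivial-isotypic projector})$; but the cleanest route is the one above, and I would present that. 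I would also remark that, since $q_0 = 1$ and $q_1 = 0$ (there are no $G$-invariant linear harmonics for an irreducible reflection group, as $G$ acts irreducibly on $\mathbb{R}^n = \mathrm{Harm}_1$), the first few terms of the product check out, which serves as a sanity test.
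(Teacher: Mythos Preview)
Your argument is correct, and it is essentially the standard proof of this identity (Chevalley--Shephard--Todd for the Hilbert series of $\mathbb{R}[x]^G$, combined with the $G$-equivariant harmonic decomposition to factor out $(1-\lambda^2)^{-1}$). Note, however, that the paper does not give its own proof of this theorem at all: it is simply quoted from Goethals--Seidel with a citation, so there is nothing in the paper to compare your argument against beyond confirming that your route matches the classical one.
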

\noindent
Note that for any $x \in \mathbb{R}^n$, the orbit
$x^G$ is a spherical $m_2$-design in $S^{n-1}$ \cite{Goethals-Seidel}.

Let $\alpha_1,\alpha_2, \ldots, \alpha_{n}$ be
the fundamental roots of a reflection group $G$. 
The corner vectors $v_1,v_2,\ldots,v_{n}$ are defined by $v_i
\perp \alpha_j$ if and only if $i \ne j$. 
We may assume $||v_k||=1$. 
We consider the set  

\[ \mathcal{X}(G,J)=\bigcup_{k \in J} r_k v_k^G,
 \]
where
$J\subset \{1,2,\ldots,n\}$ and $r_k>0$. 
Let $R$ denote the set of $r_k$. 

Bajnok \cite{Bajnok}
found new tight Euclidean designs in $\mathcal{X}(B_n,J)$.
In Section 4, using the theory of $G$-invariant harmonic polynomials, 
we extend the method of Bajnok to other reflection groups $G$, and classify the tight Euclidean designs obtained from $\mathcal{X}(G,J)$.  

\section{Xu's theorem}

Let $\Omega = \{ (x, y) \in \mathbb{R}^{2} \mid 0 \leq \sqrt{x^{2} + y^{2}} < \infty \}$.
Let $W$ be a nonnegative weight function on
$[0, \infty)$
with finite moments.
We consider the radial weight function defined by $W(\sqrt{x^2+y^2})$ on $\Omega$.
The following integral is said to be {\it radially symmetric} or {\it circularly symmetric}:
\begin{align*}
\mathcal{I} [f] =&
\int_{\Omega} f (x, y) W(\sqrt{x^{2} + y^{2}}) {\rm d} x {\rm d} y 
\nonumber \\
=&
\int_{0}^{\infty} \bigg(
\int_{0}^{2 \pi} f (r \cos \theta,
r \sin \theta) {\rm d} \theta \bigg) r W(r) {\rm d} r.
\end{align*}
\noindent
To generalize a famous theorem by Verlinden and Cools~\cite{Verlinden-Cools}
on the existence of cubature formula for radially symmetric integral,
Xu proved the following theorem:
\begin{thrm}
{\rm (\cite[Theorem 2.1, Theorem 2.2]{Xu}).}
\label{thm:X98}
{\rm (i)}
Let
\begin{equation}
\label{eq:XuOdd}
\begin{array}{ll}
\mathcal{I}_{2m}(f)   & = \displaystyle \frac{\pi}{m+1} \sum_{i=1}^m \lambda_i \sum_{j=0}^{2m+1} f
\bigg(r_i \cos \frac{(2j+\sigma_i)\pi}{2m+2}, r_i \sin \frac{(2j+\sigma_i)\pi}{2m+2} \bigg), \\
\mathcal{I}_{2m+1}(f) & = \lambda_0f(0,0) + \\
& \quad \displaystyle \frac{\pi}{m+2} \sum_{i=1}^m \lambda_i \sum_{j=0}^{2m+3}
f \bigg(r_i \cos \frac{(2j+\sigma_i)\pi}{2m+4}, r_i \sin \frac{(2j+\sigma_i)\pi}{2m+4} \bigg),
\end{array}
\end{equation}
where $\sigma_i$ takes the value $0$ if $m+i$ is even and the value $1$ if $m+i$ is odd.
Then, $\mathcal{I}_n$ forms a cubature formula of degree $2n-1$ for $\mathcal{I}$
if and only if the following two types of conditions are satisfied:
\begin{align}
\int_0^\infty r^{2j+1} W(r) {\rm d} r & = \sum_{i=1}^{\lfloor n/2 \rfloor} \lambda_i r_i^{2j},
\qquad j = 1, \cdots, n-1, \label{eq:NormOdd} \\
\sum_{i=1}^{\lfloor n/2 \rfloor} \lambda_i r_i^{2j} (-1)^i & =0,
\qquad j = \lfloor (n+3)/2 \rfloor, \cdots, n-1.
\label{eq:CosOdd}
\end{align}
(ii)
With the same symbol $\sigma_i$ as in (i),
let
\begin{align*}
\mathcal{I}_{n}(f)   & = \frac{2\pi}{m+1}
\sum_{i=1}^{\lfloor (n+2)/2 \rfloor} \lambda_i \sum_{j=0}^{2m}
f \bigg(r_i \cos \frac{(2j+\sigma_i)\pi}{2m+1}, r_i \sin \frac{(2j+\sigma_i)\pi}{2m+1} \bigg), 
\end{align*}
where $n = 2m-1$ or $2m$.
Then, $\mathcal{I}_n$ forms a cubature formula of degree $2n$ for $\mathcal{I}$
if and only if the following two types of conditions are satisfied:
\begin{align}
\int_0^\infty r^{2j+1} W(r) {\rm d} r & = \sum_{i=1}^{\lfloor (n+2)/2 \rfloor} \lambda_i r_i^{2j},
\qquad j = 1, \cdots, n,
\label{eq:NormEven} \\
\sum_{i=1}^{\lfloor (n+2)/2 \rfloor}
\lambda_i r_i^{2j} (-1)^i & =0,
\qquad j = \lfloor (n+1)/2 \rfloor, \cdots, n-1.
\label{eq:CosEven}
\end{align}
\end{thrm}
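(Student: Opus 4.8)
The plan is to derive Theorem~\ref{thm:X98} directly from the Sobolev theorem (Theorem~\ref{thm:Sobolev}) by first pinning down the symmetry group of $\mathcal{I}_n$. In every case the nodes on a fixed circle of radius $r_i$ are the vertices of a regular $N$-gon, with $N=2m+2$ for $\mathcal{I}_{2m}$ and $N=2m+4$ for $\mathcal{I}_{2m+1}$ in part (i), and $N=2m+1$ in part (ii); the parameter $\sigma_i$ merely rotates this polygon by $0$ or by $\pi/N$. First I would check that, even when the $\sigma_i$ are mixed, the whole node set---together with the origin in the $\mathcal{I}_{2m+1}$ case---is invariant under the dihedral group $G=I_2(N)$ of order $2N$: a regular $N$-gon inscribed with orientation $\sigma_i=0$ and one inscribed with $\sigma_i=1$ share the very same $N$ mirror lines, namely the lines through $0$ at angles $k\pi/N$. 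Since the nodes on each circle carry equal weight and the origin is fixed by $G$, the formula $\mathcal{I}_n$ is $G$-invariant in the sense required by Theorem~\ref{thm:Sobolev}, while the functional $\mathcal{I}$ is rotation-invariant, hence $G$-invariant. (One could use only the cyclic rotation subgroup, but it is important to use the full reflection group $G$ so that Chevalley's theorem applies below.) Thus Theorem~\ref{thm:Sobolev} reduces the claim to: $\mathcal{I}_n[f]=\mathcal{I}[f]$ for all $f\in\mathcal{P}_t(\mathbb{R}^2)^G$, where $t=2n-1$ in part (i) and $t=2n$ in part (ii).

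Next I would make $\mathcal{P}_t(\mathbb{R}^2)^G$ explicit. Since $G$ is a finite reflection group, the Chevalley--Shephard--Todd theorem gives that $\mathbb{R}[x,y]^G$ is the polynomial ring $\mathbb{R}[u,v]$, where $u=x^2+y^2$ has degree $2$ and $v=\mathrm{Re}\bigl((x+\mathrm{i}y)^{N}\bigr)$ has degree $N$ (these being the exponents of $I_2(N)$ increased by one). Hence $\mathcal{P}_t(\mathbb{R}^2)^G$ is spanned by the monomials $u^{a}v^{b}$ with $2a+Nb\le t$. The crucial observation is that both generators are constant on each orbit circle: on the circle of radius $r_i$ one has $u=r_i^{2}$, and because $N\theta_{ij}=(2j+\sigma_i)\pi$ at each node $\theta_{ij}$, also $v=r_i^{N}\cos\bigl((2j+\sigma_i)\pi\bigr)=(-1)^{\sigma_i}r_i^{N}$; at the origin $u=v=0$. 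Substituting $f=u^{a}v^{b}$ into $\mathcal{I}_n$ therefore collapses to a finite sum, equal up to the explicit normalizing constant of the formula to $\sum_i\lambda_i r_i^{2a+Nb}(-1)^{\sigma_i b}$ (with the obvious extra contribution of the origin in the $\mathcal{I}_{2m+1}$ case and only when $(a,b)=(0,0)$), whereas on the analytic side $\mathcal{I}[u^{a}v^{b}]=\Bigl(\int_0^{2\pi}\cos^{b}(N\theta)\,\mathrm{d}\theta\Bigr)\int_0^{\infty}r^{2a+Nb+1}W(r)\,\mathrm{d}r$ splits into an angular and a radial factor.

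Finally I would match these identities, one per admissible $(a,b)$, with the listed conditions. Since $2N>t$ in all four sub-cases, only $b\in\{0,1\}$ occurs. For $b=0$ the angular factor is $2\pi$ and exactness becomes the moment condition $\int_0^\infty r^{2a+1}W(r)\,\mathrm{d}r=\sum_i\lambda_i r_i^{2a}$, which over the relevant range of $a$ is exactly (\ref{eq:NormOdd}) (resp. (\ref{eq:NormEven})) once the $a=0$ total-mass normalization is set aside. For $b=1$ the angular factor $\int_0^{2\pi}\cos(N\theta)\,\mathrm{d}\theta$ vanishes, so exactness becomes $\sum_i(-1)^{\sigma_i}\lambda_i r_i^{2a+N}=0$; using the identity $(-1)^{\sigma_i}=(-1)^{m+i}$ immediate from the definition of $\sigma_i$, this is precisely (\ref{eq:CosOdd}) (resp. (\ref{eq:CosEven})), the admissible $a$ reproducing the stated range of exponents. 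The converse direction is automatic: if the listed conditions hold then $\mathcal{I}_n$ is exact on a spanning set of $\mathcal{P}_t(\mathbb{R}^2)^G$, hence on all of it, and Theorem~\ref{thm:Sobolev} returns the degree. I do not expect a single deep obstacle; the main work will be (a) the care needed to verify that the \emph{full} dihedral group $I_2(N)$, not just its rotation subgroup, stabilizes the node set when the $\sigma_i$ are mixed, and (b) the bookkeeping showing that $\{(a,b):2a+Nb\le t\}$ translates precisely into the index ranges of (\ref{eq:NormOdd})--(\ref{eq:CosEven}) in each case, including the parity shift in part (ii) caused by $N=2m+1$ being odd.
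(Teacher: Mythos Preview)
Your proposal is correct and follows essentially the same route as the paper: identify the dihedral group $D_\ell$ (your $I_2(N)$) as the symmetry group of $\mathcal{I}_n$, invoke Sobolev's theorem to reduce to $G$-invariant polynomials, use that the invariant ring is generated by $r^2$ and $r^\ell\cos\ell\theta$, and observe that substituting these generators (with only $b\in\{0,1\}$ relevant since $2\ell>t$) yields exactly the conditions (\ref{eq:NormOdd})--(\ref{eq:CosEven}). Your write-up is more explicit than the paper's---in particular you spell out the verification that the full dihedral group (not just the rotation subgroup) stabilizes the node set when the $\sigma_i$ are mixed, and the index-range bookkeeping---but the underlying argument is the same.
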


\bigskip

The aim of this section is
to give a proof of Theorem~\ref{thm:X98}
different from the original one by Xu:

\bigskip

\noindent
{\it Proof of Theorem~\ref{thm:X98}}.
Let $D_\ell$ be the dihedral group of order $2\ell$ and
$f(x_1,x_2)$ be a $D_\ell$-invariant polynomial.
Using the polar coordinate system,
we let $f(x_1,x_2) = f(r \cos \theta, r \sin \theta)$.
It is shown that
$f$ can be represented as a polynomial
in two variables $r^2, r^\ell \cos \ell \theta$.
We now consider the case where $n = 2m$ in (i);
the reader will easily see that the same argument as below works in the remaining cases.
The conditions (\ref{eq:NormOdd}), (\ref{eq:CosOdd}) respectively mean
to substitute the monomials $r^2,\cdots,r^{2n-2}$ and
$r^{n+2} \cos {(n+2)\theta}$, $r^{n+4} \cos {(n+2)\theta}$,
$\cdots$, $r^{2n-2} \cos {(n+2)\theta}$
into (\ref{eq:XuOdd}).
Thus the result follows by Theorem~\ref{thm:Sobolev}. $\Box$

\begin{remark}
(i)
Xu's original proof of Theorem~\ref{thm:X98}
is about 2 pages long only in the case of (i) with $n = 2m$.
With this in mind,
we tried a short proof
using the Sobolev theorem.
An advantage of our proof is the simplicity.
Namely,
the Sobolev theorem is the only advanced tool used in our proof, whereas,
Xu's proof requires some technical and advanced tools
in numerical analysis like,
Gaussian-Lobatto quadrature, Gaussian-Radau quadrature.
The proof by Xu also requires some tedious calculations.
Eventually our proof is shorter and simpler than the original one.
Another advantage of our proof:
The conditions (\ref{eq:NormOdd}), (\ref{eq:CosOdd})
(or (\ref{eq:NormEven}), (\ref{eq:CosEven})) are
considerably analytic, and so
researchers in other fields like combinatorics and algebra
will not be familiar with them.
Whereas, our new proof gives a general interpretation
of the above analytic conditions, and
will promise researchers in these areas to enjoy Theorem~\ref{thm:X98} well.
The authors hope that
researchers in many different fields know
the importance of Theorem~\ref{thm:X98} and will be
more interested in classical theories of cubature formulas
developed in numerical analysis.
(ii)
Bajnok~\cite[Theorem 9]{Bajnok1} found a tight Euclidean $t$-design of
$\mathbb{R}^2$ which has the same structure of points as Xu's formula,
as a generalization of a tight $4$-design by
Bannai and Bannai~\cite{Bannai-Bannai}.
To do this, he implicitly used
the same idea as in Theorem~\ref{thm:X98}; for instance
Eq.~(10) in his paper corresponds to Eq.~(\ref{eq:CosOdd})
(or Eq.~(\ref{eq:CosEven})) of our paper.
\end{remark}

\section{Orbits of a reflection group as Euclidean designs}
In this section we classify the tight Euclidean designs obtained from  
$\mathcal{X}(G,J)$ for a finite irreducible reflection group $G$.
A finite set $X \subset \mathbb{R}^d$ is said to be {\it antipodal} if $X=-X$.  
A tight Euclidean $2e$-design has a weight function which 
is constant on each $X_i$~\cite{Bannai-Bannai}, and so does an antipodal
tight Euclidean
$(2e-1)$-design~\cite{Etsuko2}. 
Throughout this section
we assume a weight function is constant on each $G$-orbit.

First, we look at a stronger theorem than Theorem \ref{thm:NS88}
for $G$-invariant Euclidean designs.
A Euclidean $t$-design $X$ is said to be
{\it $G$-invariant} if $X$ is a union of $G$-orbits and
to each point of the same orbit an equal weight is assigned.
\begin{lem} \label{constant}
Let $G$ be a subgroup of $O(\mathbb{R}^n)$.
Let $f$ be a $G$-invariant polynomial and
$x^G$ be a $G$-orbit.
Then, $f(y)=f(z)$ for any $y,z\in x^G$.
\end{lem}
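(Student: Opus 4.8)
The plan is to reduce the statement directly to the definition of $G$-invariance, so the argument is essentially bookkeeping with the action of $G$. First I would fix $y,z \in x^G$ and choose $\sigma,\tau \in G$ with $y = x^{\sigma}$ and $z = x^{\tau}$; such elements exist precisely because $y$ and $z$ lie in the common orbit $x^G$. The goal is then to show $f(x^{\sigma}) = f(x^{\tau})$, and it clearly suffices to show $f(x^{\rho}) = f(x)$ for every $\rho \in G$.

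Next I would unwind the defining identity. By definition of the action, $(\rho f)(u) = f(u^{\rho^{-1}})$ for all $u \in \mathbb{R}^n$, and $G$-invariance of $f$ means $\rho f = f$ for every $\rho \in G$, i.e. $f(u^{\rho^{-1}}) = f(u)$ for all $u$ and all $\rho$. Replacing $\rho$ by $\rho^{-1}$ (which again ranges over $G$) and taking $u = x$ yields $f(x^{\rho}) = f(x)$. Applying this with $\rho = \sigma$ and with $\rho = \tau$ gives $f(y) = f(x^{\sigma}) = f(x) = f(x^{\tau}) = f(z)$, as desired.

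There is no genuine obstacle here: the lemma is little more than a reformulation of the definition of $G$-invariance, and the only point deserving a line of care is keeping the convention $(\sigma f)(x) = f(x^{\sigma^{-1}})$ straight when passing between $\sigma$ and $\sigma^{-1}$. The statement is recorded separately because it is the elementary fact underpinning the Sobolev-type reduction used repeatedly in this section (constancy of an invariant integrand on each orbit lets one replace sums over $X$ by weighted sums over orbit representatives). I would also remark in passing that polynomiality of $f$ plays no role — the same proof shows any $G$-invariant function is constant on $G$-orbits — but since only the polynomial case is needed later I would state it as above.
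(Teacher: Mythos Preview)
Your argument is correct and is exactly the kind of unwinding of the definition that the paper has in mind; the paper's own proof consists of the single word ``Straightforward.'' Your added remarks (that polynomiality is irrelevant and that the only care needed is the $\sigma$ versus $\sigma^{-1}$ bookkeeping) are accurate and harmless.
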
 
\begin{proof}
Straightforward.
\end{proof} 
\
Let $|G|$ be the order of a group $G$. 
\begin{thrm} \label{inv}
Let $G$ be a subgroup of $O(\mathbb{R}^n)$.
Let $X=\cup_{k=1}^M r_k x_k^G$,
where $x_k\in S^{n-1}$ and $r_k>0$.
The following are equivalent: 
\begin{enumerate}
\item $X$ is a $G$-invariant Euclidean $t$-design. 
\item $\sum_{x \in X} w(x) ||x||^{2 j} \varphi(x) = 0$
for any $\varphi \in {\rm Harm}_l (\mathbb{R}^n)^G$ with
$1 \leq l \leq t, 0 \leq j \leq \lfloor \frac{t-l}{2} \rfloor$. 
\end{enumerate}
\end{thrm}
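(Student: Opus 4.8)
The plan is to derive Theorem~\ref{inv} from the Neumaier--Seidel theorem (Theorem~\ref{thm:NS88}) by inserting a Reynolds-operator averaging step, in the same spirit in which Sobolev's theorem (Theorem~\ref{thm:Sobolev}) reduces exactness on all polynomials to exactness on invariant ones. The implication $(1)\Rightarrow(2)$ will be immediate: if $X$ is a Euclidean $t$-design with weight $w$, then Theorem~\ref{thm:NS88} gives $\sum_{x\in X}w(x)f(x)=0$ for every $f\in\|x\|^{2j}{\rm Harm}_l(\mathbb{R}^n)$ with $1\le l\le t$ and $0\le j\le\lfloor\frac{t-l}{2}\rfloor$, and since ${\rm Harm}_l(\mathbb{R}^n)^G\subseteq{\rm Harm}_l(\mathbb{R}^n)$ the case $f=\|x\|^{2j}\varphi$ with $\varphi\in{\rm Harm}_l(\mathbb{R}^n)^G$ is a special instance, which yields $(2)$.

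For the converse $(2)\Rightarrow(1)$ I would write $\overline{g}=\frac{1}{|G|}\sum_{\sigma\in G}\sigma g$ for the $G$-average of a polynomial $g$, which is always $G$-invariant. The first step is an averaging identity for design sums: since $X$ is a union of $G$-orbits and $w$ is constant on each orbit, reindexing each orbit by $x\mapsto x^{\sigma^{-1}}$ (using the invariance of $w$ on orbits, cf.\ Lemma~\ref{constant}) shows $\sum_{x\in X}w(x)(\sigma g)(x)=\sum_{x\in X}w(x)g(x)$ for every $\sigma\in G$, hence $\sum_{x\in X}w(x)g(x)=\sum_{x\in X}w(x)\overline{g}(x)$ for every polynomial $g$.

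The second step is to check that averaging respects the relevant spaces. Fix $\psi\in{\rm Harm}_l(\mathbb{R}^n)$ and $j\ge0$, and put $g=\|x\|^{2j}\psi$. Since each $\sigma\in G$ fixes the function $\|x\|^2$ and acts on polynomials by a linear substitution preserving homogeneous degree, $\sigma g=\|x\|^{2j}(\sigma\psi)$ and hence $\overline{g}=\|x\|^{2j}\overline{\psi}$ with $\overline{\psi}$ homogeneous of degree $l$; and since the Laplacian commutes with orthogonal substitutions, $\Delta\overline{\psi}=\overline{\Delta\psi}=0$, so $\overline{\psi}\in{\rm Harm}_l(\mathbb{R}^n)^G$. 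Combining the two steps with hypothesis $(2)$, for every $\psi\in{\rm Harm}_l(\mathbb{R}^n)$ and every admissible $j$ one gets $\sum_{x\in X}w(x)\|x\|^{2j}\psi(x)=\sum_{x\in X}w(x)\|x\|^{2j}\overline{\psi}(x)=0$, so by Theorem~\ref{thm:NS88} the set $X$ is a Euclidean $t$-design; it is $G$-invariant by construction.

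I do not expect a genuine obstacle here: the whole argument is a short splicing of Theorem~\ref{thm:NS88} with the Reynolds operator, and the only points needing (routine) care are the reindexing behind the averaging identity and the fact that the $G$-average of a harmonic polynomial is again harmonic of the same degree, both of which rest on $G$ acting by orthogonal transformations. The one mild bookkeeping issue is the support: the norms among the $r_k$ need not be distinct, but this does not affect the application of Theorem~\ref{thm:NS88}, which is phrased through the harmonic conditions rather than through the decomposition of $X$ into concentric spheres.
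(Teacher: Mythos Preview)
Your proof is correct and follows essentially the same route as the paper: both reduce to Theorem~\ref{thm:NS88} by showing, via the Reynolds operator, that the design sum of $\|x\|^{2j}\psi$ equals the design sum of $\|x\|^{2j}\overline{\psi}$ with $\overline{\psi}\in{\rm Harm}_l(\mathbb{R}^n)^G$. The only cosmetic difference is that the paper writes this averaging identity through the explicit orbit decomposition $X=\bigcup_k r_k x_k^G$ and Lemma~\ref{constant}, whereas you argue it directly by reindexing $x\mapsto x^{\sigma^{-1}}$; you also make explicit (via commutation of $\Delta$ with orthogonal substitutions) why $\overline{\psi}$ remains harmonic, which the paper takes for granted.
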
 
\begin{proof}
For $f \in {\rm Harm}_l(\mathbb{R}^d)$, the polynomial 
\[
\varphi(\xi)=\frac{1}{|G|}\sum_{g \in G}f(\xi^g)
\]
is an element of ${\rm Harm}_l(\mathbb{R}^d)^G$.
Let $w(x)=w_k$ for every $x \in x_k^G$.
By Lemma \ref{constant}, for any $f \in {\rm Harm}_l(\mathbb{R}^d)$, we have
\begin{equation*}
\begin{split}
\sum_{x \in X} w(x) ||x||^{2 j} f(x)& = \sum_{k=1}^M w_k r_k^{2 j} \sum_{x \in x_k^G} f(r_k x) \\
									& = \sum_{k=1}^M \frac{w_k r_k^{2 j+l} |x_k^G|}{|G|} \sum_{g \in G} f(x_k^g) \\
									& = \sum_{k=1}^M w_k r_k^{2 j+l} |x_k^G| \varphi(x_k) \\
									& = \sum_{k=1}^M w_k r_k^{2 j} \sum_{x \in x_k^G} \varphi(r_k x) \\ 
									& = \sum_{x \in X} w(x) ||x||^{2 j} \varphi(x). 
\end{split}
\end{equation*}
The result thus follows by Theorem \ref{thm:NS88}. 
\end{proof}

\begin{remark}
The radii $r_k$ are not necessarily mutually distinct in Theorem \ref{inv}. 
Goethals and Seidel \cite{Goethals-Seidel} stated
Theorem \ref{thm:Sobolev} for spherical designs.
Theorem \ref{inv} with all $r_k=1$ means
the theorem of $G$-invariant spherical designs.
The approach of using the orbits under subgroups of $O(\mathbb{R}^d)$
has long been considered \cite{Bannai-Bannai2}.
Theorem~\ref{inv} reduces the computational cost to check
the strength of a $G$-invariant Euclidean design
less than using Theorem~\ref{thm:NS88}.
\end{remark}

From now on,
let $G$ be a finite irreducible reflection group,
and $v_k$ be a corner vector.
Put $N_k=|v_k^G|$. The following is immediate by
Lemma \ref{constant} and Theorem \ref{inv}.  
\begin{cor} \label{main}
$\mathcal{X}(G,J)$ is a Euclidean $t$-design if and only if 
there exist $w_k>0$ and $r_k>0$ such that
the equation $\sum_{k \in J}w_k r_k^{2j+i} N_k f(v_k)=0$ holds for any $f 
\in {\rm Harm}_i(\mathbb{R}^d)^G$,
where 
$i,j$ are positive integers satisfying $1 \leq 2j+i \leq t$. 
\end{cor}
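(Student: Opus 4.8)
\emph{Proof proposal.} The plan is to obtain Corollary~\ref{main} as the special case of Theorem~\ref{inv} in which the orbits are those of the corner vectors. First I would note that $\mathcal{X}(G,J)=\bigcup_{k\in J} r_k v_k^G$ is, by construction, a union of $G$-orbits, and that assigning one common weight $w_k>0$ to all points of $r_k v_k^G$ makes it $G$-invariant in the sense of the paragraph preceding Theorem~\ref{inv}; hence for such a set ``Euclidean $t$-design'' and ``$G$-invariant Euclidean $t$-design'' are the same notion, the positivity of the weights being part of the definition. Since the corner vectors were normalized so that $\|v_k\|=1$, each orbit $r_k v_k^G$ lies on the sphere of radius $r_k$, so $X=\mathcal{X}(G,J)$ is supported on at most $|J|$ concentric spheres and Theorem~\ref{inv} applies with $M=|J|$ and $x_k=v_k$ for $k\in J$.

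Next I would translate condition~(2) of Theorem~\ref{inv}. Fix $f\in{\rm Harm}_i(\mathbb{R}^n)^G$ and $j$ with $1\le 2j+i\le t$. On the orbit $r_k v_k^G$ the weight function equals $w_k$ and the norm equals $r_k$, so
\[
\sum_{x\in X} w(x)\,\|x\|^{2j}f(x)=\sum_{k\in J} w_k\, r_k^{2j}\sum_{x\in v_k^G} f(r_k x).
\]
Because $f$ is homogeneous of degree $i$, each inner summand equals $r_k^{\,i}f(x)$; because $f$ is $G$-invariant, Lemma~\ref{constant} gives $f(x)=f(v_k)$ for every $x\in v_k^G$, so the inner sum equals $N_k f(v_k)$ with $N_k=|v_k^G|$. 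Hence the displayed quantity equals $\sum_{k\in J} w_k r_k^{2j+i} N_k f(v_k)$. Running over all admissible $f$ and $j$, and matching the index pair $(l,j)$ of Theorem~\ref{inv} with $(i,j)$ here, we see that condition~(2) of Theorem~\ref{inv} for $X$ is literally the assertion that $\sum_{k\in J} w_k r_k^{2j+i} N_k f(v_k)=0$ for all $f\in{\rm Harm}_i(\mathbb{R}^n)^G$ and all $i,j$ with $1\le 2j+i\le t$. Combining this equivalence with Theorem~\ref{inv} gives the corollary.

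There is essentially no obstacle: the argument is bookkeeping, and the only places that need a moment's care are (i) checking that $\mathcal{X}(G,J)$ still fits the hypotheses of Theorem~\ref{inv} when some of the $r_k$ coincide, which is exactly the content of the remark following that theorem, and (ii) not losing the homogeneity factor $r_k^{\,i}$ when the scalar $r_k$ is pulled out of $f(r_k x)$, so that the exponent of $r_k$ in the final condition is $2j+i$ rather than $2j$.
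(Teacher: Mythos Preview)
Your proposal is correct and follows exactly the route the paper intends: the paper's own proof is the single sentence ``immediate by Lemma~\ref{constant} and Theorem~\ref{inv},'' and you have simply spelled out that bookkeeping---applying Theorem~\ref{inv} with $x_k=v_k$, using homogeneity to extract the factor $r_k^{\,i}$, and using Lemma~\ref{constant} to replace the orbit sum by $N_k f(v_k)$. Your remarks about possibly coinciding radii and about not losing the $r_k^{\,i}$ factor are the only points that need care, and both are handled correctly.
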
 
The dimension of ${\rm Harm}_i(\mathbb{R}^d)^G$
is clear by Theorem \ref{dim_inv}. 
We can determine the basis of ${\rm Harm}_i(\mathbb{R}^d)^G$ by
harmonic polynomials $f$ satisfying $f(x^g)=f(x)$ for 
each generator $g$ of $G$. 
By the basis of ${\rm Harm}_i(\mathbb{R}^d)^G$ and Corollary \ref{main}, 
we know a necessary and sufficient condition for
$\mathcal{X}(G,J)$ to be a Euclidean $t$-design.
Bajnok \cite{Bajnok} found an explicit such condition
for the group $B_n$ by using Corollary \ref{main}. 
For other groups, it is possible to give the conditions,
but the statements are not simple. 
Therefore we do not write them in the present paper. 

Now,
let us classify 
the tight Euclidean design obtained from $\mathcal{X}(G,J)$. 
For each group, we determine the possible maximum strength of $\mathcal{X}(G,J)$ for any $J$ and radii $R$.
Since the cardinality of $v_k^G$ is easily calculated, 
we can check whether the total size of a union of several orbits attains the fisher type inequality. 
For the set attaining the bound, we give its maximum strength by Corollary \ref{main}.

Hereafter let $e_i\in \mathbb{R}^n$ be the row vector whose 
$i$-th entry is $1$ and other entries are $0$. 
Let $S_n$ be the symmetric group. Define 
\[{\rm 
sym}(f):=\frac{1}{|(S_n)_{f}|}\sum_{g \in S_n} f(x^g), 
\]
 where $(S_n)_f:=\{g \in S_n \mid f(x^g)=f(x) \}$.

\subsection{Group $A_n$}
\quad \\
\textbf{Dynkin diagram}
\begin{center}
\unitlength.015in
\begin{picture}(180,20)
\put( 30, 10){\circle*{5}}
\put( 30, 10){\line(1,0){30}}
\put( 60, 10){\circle*{5}}
\put( 60, 10){\line(1,0){30}}
\put( 90, 10){\circle*{5}}
\put( 90, 10){\line(1,0){15}}
\put( 115, 8 ){$\cdots$}
\put( 135, 10){\line(1,0){15}}
\put( 150, 10 ){\circle*{5}}

\put( 27, 20){$\alpha_1$}
\put( 57, 20){$\alpha_2$}
\put( 87, 20){$\alpha_3$}
\put( 147, 20){$\alpha_n$}

\end{picture}
\end{center}

\noindent
\textbf{Exponents} \\
$1,2, \ldots, n$

\noindent
\textbf{Fundamental roots} \\
$\alpha_{i}:=e_i-e_{i+1}$ for $1\leq i \leq n-1$. 
$\alpha_n:=[a,a,\ldots,a,b]$ where 
$a= (-1+\sqrt{n+1})/n$ and $b=(n-1+\sqrt{n+1})/n$. 

\noindent
\textbf{Corner Vectors}\\ 
$v_k=[c_k, \ldots ,c_k,d_k, \ldots , d_k]$ whose first $k$ coordinates 
are equal to $c_k$, and last $n-k$ coordinates are equal to $d_k$, where 
	\[ 
	c_k=\frac{n+1-k+\sqrt{n+1}}{\sqrt{k(n+1-k)(n+2+2\sqrt{n+1})}}, \qquad 
d_k=\frac{-k}{\sqrt{k(n+1-k)(n+2+2\sqrt{n+1})}}. 
	\]

\noindent
\textbf{Reflection group} \\
The reflection group $A_n \subset O(\mathbb{R}^n)$ is generated by the 
following: 
\[
r(\alpha_i)=\left[
{}^te_1, 
\cdots 
{}^te_{i-1}, 
{}^te_{i+1}, 
{}^te_{i},  
{}^te_{i+2}, 
\cdots ,
{}^te_n    
\right] \text{ for $1\leq i \leq n-1$},
\]

\[
r(\alpha_n)=\left[
\begin{array}{cc}
I_{n-1}-a^2 
({}^tjj)
& -ab {}^tj\\
-ab  j & 1-b^2
\end{array}
\right], 
\]
where $I_{n}$ is the identity matrix of size $n$, 
and $j$ is the all-ones row vector. 

\noindent
\textbf{Orbits} \\
Let $U_1$ be the set of all vectors such that
$k$ coordinates are equal to $c_k$, 
and other $n-k$ coordinates are equal to $d_k$. 
Let $U_2$ be the set of all vectors such that
 $k-1$ coordinates are equal to 
$c_k'$,
 and other $n+1-k$ coordinates are equal to $d_k'$, where 
	\[ 
	c'_k= \frac{n+1-k}{\sqrt{k(n+1-k)(n+2+2\sqrt{n+1})}}, \qquad  
d'_k=\frac{-k-\sqrt{n+1}}{\sqrt{k(n+1-k)(n+2+2\sqrt{n+1})}}. 
	\] 
Then the orbit $v^{A_n}_k= U_1 \cup U_2$.  
 Furthermore, we have $N_k=\binom{n+1}{k}$
and $v_k^{A_n}=-v_{n+1-k}^{A_n}$.

\noindent
\textbf{Harmonic Molien series}
\begin{equation*}
\frac{1}{(1-t^3)(1-t^4)\cdots (1-t^{n+1})}=
\left\{\begin{array}{ll}
1+t^3+t^6+ \cdots, & \qquad \text{ if } n = 2, \\
1+t^3+t^4+t^6+\cdots, & \qquad \text{ if } n =3, \\ 
1+t^3+t^4+t^5+\cdots, & \qquad \text{ if } n \geq 4. \\
\end{array} \right.
\end{equation*}
\noindent
\textbf{$G$-invariant harmonic polynomials}\\ 
1. {\it Degree $3$.} \\ 
Note that $\dim {\rm Harm}_3(\mathbb{R}^n)^{A_n} =1$ for any $n \geq 2$. 
${\rm Harm}_3(\mathbb{R}^n)^{A_n}$ is spanned by the following: \\
(i) $n=2$.
\begin{equation*}
f_3=x_1^3-3 x_1^2 x_2-3 x_1 x_2^2+x_2^3.
\end{equation*} 
(ii) $n=3$.
\begin{equation*}
f_3={\rm sym}(x_1^3)-\frac{3}{2} {\rm sym}(x_1 x_2^2)-
\frac{3}{4} {\rm sym}(x_1 x_2 x_3).
\end{equation*}
(ii)
$n\geq 4$.
\begin{equation*}
f_3={\rm sym}(x_1^3)-\frac{3}{n-1} {\rm sym}(x_1 x_2^2)+
\frac{6(2-\sqrt{n+1})}{(n-1)(n-3)} {\rm sym}(x_1 x_2 x_3).
\end{equation*}
2. {\it Degree $4$.}\\
Note that $\dim {\rm Harm}_4(\mathbb{R}^2)^{A_2} =0$ and $\dim {\rm 
Harm}_4(\mathbb{R}^n)^{A_n} =1$ for any $n \geq 3$. The following are $S_n$-invariant harmonic polynomials: 
\begin{align*}
&h_{4,1}= {\rm sym}(x_1^4)-\frac{6}{n-2} {\rm sym}(x_1^2 x_2^2), \\ 
&h_{4,2}= {\rm sym}(x_1 x_2 x_3 x_4), \\
&h_{4,3}= {\rm sym}(x_1 x_2^3)-\frac{6}{n-2}{\rm sym}(x_1 x_2 x_3^2).
\end{align*}
${\rm Harm}_4(\mathbb{R}^n)^{A_n}$ is spanned by the following:\\
(i) $n=3$.
\begin{equation*}
f_4 =  h_{4,1} - \frac{20}{13} h_{4,3}.
\end{equation*}
(ii) $n \geq 4$. 
\begin{equation*}
f_4= h_{4,1}+c_{4,2}h_{4,2}+c_{4,3}
h_{4,3},
\end{equation*}
where
\begin{align*}
& c_{4,2}=\frac{24 (n+2) (n^2-5n-12+4\sqrt{n+1})}{(n-1)(n-2)(n^3-2 n^2 -15 
n -16)}, \\
& c_{4,3}=-\frac{4 (n+2)(n^2-2n-7-(n-1)\sqrt{n+1})}{(n-1)(n^3-2 n^2 -15 n 
-16)}.
\end{align*}
3. {\it Degree $5$}.\\
Note that $\dim {\rm Harm}_5(\mathbb{R}^n)^{A_n} =0$ for $n=2,3$, and $\dim 
{\rm Harm}_5(\mathbb{R}^n)^{A_n} =1$ for any $n \geq 4$. 
The following are $S_n$-invariant harmonic polynomials:  
\begin{align*}
&h_{5,1}= {\rm sym}(x_1^5)-\frac{10}{n-1} {\rm sym}(x_1^2 
x_2^3)+\frac{30}{(n-1)(n-2)}{\rm sym}(x_1 x_2^2 x_3^2), \\ 
&h_{5,2}= {\rm sym}(x_1^5)-\frac{10}{n-1} {\rm sym}(x_1^2 
x_2^3)+\frac{5}{n-1} {\rm sym}(x_1 x_2^4), \\
&h_{5,3}= {\rm sym}(x_1 x_2 x_3^3)-\frac{9}{n-3}{\rm sym}(x_1 x_2 x_3 
x_4^2), \\
&h_{5,4}= {\rm sym}(x_1 x_2 x_3 x_4 x_5).
\end{align*} 
${\rm Harm}_4(\mathbb{R}^n)^{A_n}$ is spanned by the following: \\
(i) $n=4$.
\begin{equation*} 
f_5=h_{5,1}+\frac{17-20\sqrt{5}}{58} h_{5,2}+\frac{10(18+\sqrt{5})}{87} 
h_{5,3}.
\end{equation*}
(ii) $n \geq 5$. 
\begin{equation*}
f_5=h_{5,1}+c_{5,2} h_{5,2}+c_{5,3} h_{5,3}+c_{5,4}h_{5,4},
\end{equation*}
where
\begin{align*}
&c_{5,2}=-\frac{2n^3+5n^2-21n-90-n(n+6)\sqrt{n+1}}{4n^3+3n^2-60n-180}, \\
&c_{5,3}=\frac{20(2n^3+6n^2-32n-168+(n^2-8n+12)\sqrt{n+1})}{(n-1)(n-2)(4n^3+3n^2-60n-180)}, 
\\
&c_{5,4}=-\frac{120(n+6)(n^2-11n-78+(2n^2-2n+12)\sqrt{n+1})}{(n-1)(n-2)(n-3)(4n^3+3n^2-60n-180)}.
\end{align*}

\noindent
\textbf{Substitute $v_k$ for $G$-invariant harmonic polynomials }\\ 
1. {\it Degree $3$.} \\ 
For $n=2,n\geq 4$, 
\begin{equation*}
f_3(v_k)=-\frac{k-\frac{n+1}{2}}{\sqrt{k(n+1-k)}} \phi_3(n),
\end{equation*} 
where
\begin{equation*}
\phi_3(n)=\frac{2(n^3+3n^2-12n-16+(3n^2-4n-16)\sqrt{n+1})}{(n-1)(n-3)(n+2+2\sqrt{n+1})^{\frac{3}{2}}}.
\end{equation*}
For $n=3$, 
\begin{equation*}
f_3(v_1)=\frac{729}{4}, \qquad f_3(v_2)=0, \qquad f_3(v_3)=-\frac{729}{4}. 
\end{equation*}
2. {\it Degree $4$.}\\
For $n\geq 3$, 
\begin{equation*}
f_4(v_k)= \frac{(k-\alpha )(k-\beta )}{k(n+1-k)}\phi_4(n),
\end{equation*}
where
\begin{align*}
& \phi_4(n) \\
& = \frac{6(n+1)(n^5+7n^4-24n^3-160n^2-256n-128
+4(n^4-20n^2-48n-32)\sqrt{n+1})}{
(n-1)(n-2)(n^3-2n^2-15n-16)(n+2+2\sqrt{n+1})^2}, 
\end{align*}
\begin{equation*}
\alpha = \frac{n+1}{2}-\frac{\sqrt{3(n^2-1)}}{6}, \qquad \beta = 
\frac{n+1}{2}+\frac{\sqrt{3(n^2-1)}}{6}.
\end{equation*}
3. {\it Degree $5$.}\\
For $n\geq 4$, 
\begin{equation*}
f_5(v_k)=-\frac{(k-\frac{n+1}{2})(k-\alpha')(k-\beta')}{(k(n+1-k))^{\frac{3}{2}}}\phi_5(n),
\end{equation*}
where
\begin{multline*}
\phi_5(n)=
 \frac{24(n+1)(2n^6+31n^5+50 n^4-448 n^3-2144 n^2-3200 
n-1536)}{(n-1)(n-2)(n-3)(4 n^3+3 n^2-60 n-180)(n+2+2\sqrt{n+1})^{\frac{5}{2}}} \\ 
 +\frac{24(n+1)(11 n^5+50 n^4-96 n^3 -1120 n^2-2432 n -1536  
)\sqrt{n+1}}{(n-1)(n-2)(n-3)(4 n^3+3 n^2-60 n-180)(n+2+2\sqrt{n+1})^{\frac{5}{2}}},
\end{multline*} 
\begin{equation*}
\alpha'=\frac{n+1}{2}-\frac{\sqrt{3(n+1)(2n-3)}}{6}, \qquad 
\beta'=\frac{n+1}{2}+\frac{\sqrt{3(n+1)(2n-3)}}{6}.
\end{equation*}

\begin{thrm} \label{A_n}
There is no choice of $J$, $R$ and $w$ for which $(\mathcal{X}(A_n,J),w)$ 
is a Euclidean $6$-design. 
\end{thrm}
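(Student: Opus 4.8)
\emph{Proof proposal.}
The plan is to suppose $(\mathcal{X}(A_n,J),w)$ were a Euclidean $6$-design and contradict the conditions furnished by Corollary~\ref{main}. Writing $a_k:=w_kN_k>0$ for $k\in J$ (and $a_k:=0$ otherwise), Corollary~\ref{main} together with the harmonic Molien series of $A_n$ shows that the design conditions through degree $6$ are exactly: $\sum_{k}a_kr_k^{2j+3}f_3(v_k)=0$ for $j=0,1$; $\sum_{k}a_kr_k^{2j+4}f_4(v_k)=0$ for $j=0,1$ (only for $n\geq3$); $\sum_{k}a_kr_k^{5}f_5(v_k)=0$ (only for $n\geq4$); and $\sum_{k}a_kr_k^{6}f_6(v_k)=0$ for each basis element $f_6$ of ${\rm Harm}_6(\mathbb{R}^n)^{A_n}$, a space of dimension $1$ if $n=4$ and $2$ if $n\geq5$. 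Using $v_{n+1-k}^{A_n}=-v_k^{A_n}$ and the parities of $f_3,f_5$ (odd) and $f_4,f_6$ (even), I would rewrite everything through the pair-sums $S_m^{(k)}:=a_kr_k^m+a_{n+1-k}r_{n+1-k}^m$ and pair-differences $D_m^{(k)}:=a_kr_k^m-a_{n+1-k}r_{n+1-k}^m$ (indexed by $k<\tfrac{n+1}{2}$): the odd-degree conditions involve only the $D_m^{(k)}$, the even-degree ones only the $S_m^{(k)}$, and when $n$ is odd the self-paired index $(n+1)/2$ contributes to the even-degree conditions only.

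Along the way one needs two routine facts. First, $\phi_3(n),\phi_4(n),\phi_5(n)\neq0$ for every $n$ in the range where the displayed formula applies; each is a rational function of $n$ and $\sqrt{n+1}$, so after clearing the radical this is a finite check that two integer polynomials have no common root in the relevant range, and for the small exceptional $n$ one uses the tabulated numerical values (e.g. $f_3(v_1)=\tfrac{729}{4}$). Second, a Euclidean design supported by a single sphere is a spherical design of the same strength, so any configuration that is forced onto one sphere can be tested by a direct moment computation.

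Now argue by cases on $n$. For $n=2$, the two degree-$3$ conditions force $J=\{1,2\}$, $r_1=r_2$ and $a_1=a_2$, so $X$ is a regular hexagon on a circle, which is not a spherical $6$-design. For $n=3$, since $f_3(v_1)=\tfrac{729}{4}=-f_3(v_3)$, $f_3(v_2)=0$ and $f_4(v_k)\neq0$ for all $k$, the degree-$3$ and degree-$4$ conditions eliminate every $J$ except $J=\{1,2,3\}$ with $r_1=r_3$, $a_1=a_3$, $r_2=r_1$, $a_2=\tfrac43a_1$; this puts $X$ on one sphere as a prescribed weighted union of the cube $v_1^{A_3}\cup v_3^{A_3}$ and the octahedron $v_2^{A_3}$, which fails the sixth-moment test (equivalently, the degree-$6$ invariant harmonic condition fails). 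For $n=4$, the conditions in degrees $\leq5$ eliminate every proper subset $J\subsetneq\{1,2,3,4\}$ (via a forced-sign argument, or the incompatibility of the two degree-$3$ relations with the degree-$5$ relation), while for $J=\{1,2,3,4\}$ they force $D_5^{(1)}=D_5^{(2)}=0$ and $S_m^{(2)}=\tfrac32S_m^{(1)}$ for $m=4,6$, leaving only a one-parameter family; the remaining degree-$6$ condition then reads $(f_6(v_1)+\tfrac32f_6(v_2))S_6^{(1)}=0$ with $S_6^{(1)}>0$, i.e. a false numerical identity. For $n\geq5$ one proceeds in the same way: the five conditions in degrees $\leq5$ — whose coefficient vectors in $k$ are the nonconstant rational functions tabulated above — force the $D_5^{(k)}$ to vanish when $J$ meets at most two antipodal pairs and otherwise confine them to a small subspace, and the two independent degree-$6$ conditions, together with the positivity of the $S_m^{(k)}$ and the sign pattern of $f_4(v_k)$, remove the remaining freedom.

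The hardest part is the case $n\geq4$ with $|J|$ large: a dimension count is useless, since $|J|$ can be as large as $n$, so the contradiction must be extracted by combining all the conditions through degree $6$ with the positivity of the weights, and this genuinely requires the value of a degree-$6$ invariant harmonic at the corner vectors (for $n\geq5$, of both basis elements) — this value, computed exactly as $f_3(v_k),f_4(v_k),f_5(v_k)$ were above, is the key input that makes the final numerical identity fail. A secondary nuisance is the bookkeeping for the finitely many $n$ at which $\tfrac{n+1}{2}$, or one of $\alpha,\beta,\alpha',\beta'$, is an integer, so that some $f_3(v_k)$, $f_4(v_k)$ or $f_5(v_k)$ vanishes and the corresponding orbit drops out of an equation.
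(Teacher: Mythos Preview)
Your approach differs substantially from the paper's, and it has a genuine gap precisely where you yourself flag it as ``the hardest part.''

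The paper does not attempt to combine the full system of $A_n$-invariant harmonic conditions in degrees $3,4,5,6$. Instead it exhibits a single harmonic polynomial of degree~$6$,
\[
f={\rm sym}(x_1x_2^5)-\tfrac{10}{3}\,{\rm sym}(x_1^3x_2^3),
\]
computes the orbit sum $\sum_{x\in v_k^{A_n}}f(x)$ in closed form as an explicit function of $k$ and $n$, and then shows by elementary one-variable calculus (locating the critical points of a quartic $F(k)$ and checking its values there and at the endpoints) that this orbit sum is \emph{strictly negative} for every $1\le k\le n$ and every $n\ge 2$. Since
\[
\sum_{x\in X}w(x)f(x)=\sum_{k\in J}w_k\,r_k^{6}\sum_{x\in v_k^{A_n}}f(x)
\]
is then a positive combination of strictly negative numbers, it cannot vanish, contradicting the Neumaier--Seidel condition for a $6$-design. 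One witness polynomial, one sign argument, uniform in $n$ and $J$, no case analysis whatsoever.

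Your proposal instead tries to squeeze a contradiction out of the relations coming from $f_3,f_4,f_5$ together with a basis of ${\rm Harm}_6(\mathbb{R}^n)^{A_n}$ and positivity. For $n=2,3$ your sketch is plausible, but for $n\ge 4$ with $|J|$ large you never actually close the argument: you assert that the degree-$6$ condition ``reads\ldots a false numerical identity'' (for $n=4$) or that the two degree-$6$ conditions ``remove the remaining freedom'' (for $n\ge 5$), yet you compute neither a basis of ${\rm Harm}_6(\mathbb{R}^n)^{A_n}$ nor its values at the $v_k$. Without those values the contradiction is simply unproved; the degree-$\le 5$ conditions alone cannot obstruct a $6$-design, and there is no a~priori reason, short of an explicit sign or rank computation, why two further linear relations among the nonnegative $S_6^{(k)}$ must be incompatible with the earlier constraints when $|J|$ is arbitrary. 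The paper's single-sign witness is exactly the missing idea that makes the large-$|J|$ case trivial rather than intractable.
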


\begin{proof}
The polynomial of degree $6$
\[
f(x_1,x_2,\ldots, x_n)={\rm sym}(x_1 x_2^5)-\frac{10}{3}{\rm sym}(x_1^3 
x_2^3)
\]
is harmonic for any $n\geq 2$. We can calculate 
\[
\sum_{x \in v_k^{A_n}} f(x) =  g_1(n,k)( k(k-n-1)(4k^2-4(n+1)k+n^2+5n+4) 
g_2(n)+g_3(n)),
\]  
where 
\begin{align*}
& g_1(n,k)= 
\frac{n(n+1)}{3k^3(n+1-k)^3(n+2+2\sqrt{n+1})^3}\binom{n-1}{k-1}, \\
& g_2(n)=5(n^2+11n+12+(6n+12)\sqrt{n+1}), \\
& g_3(n)=(n+1)^2(n+2)(2n^2+28n+30+(15n+30)\sqrt{n+1}).
\end{align*}
Note $g_1(n,k)>0$ for $1\leq k\leq n$. Define
\[
F(k)=k(k-n-1)(4k^2-4(n+1)k+n^2+5n+4) g_2(n)+g_3(n).
\]
For a fixed $n$, we prove $F(k) < 0$ for $1\leq k \leq n$. 
We have 
\[
\frac{d}{d k}
F(k)=16\left(k-\frac{n+1}{2} \right) (k-\alpha'' 
)(k-\beta'') g_2(n),
\]
where
\[
\alpha''= \frac{n+1}{2}-\frac{\sqrt{2(n+1)(n-2)}}{4}, \qquad \beta''= 
\frac{n+1}{2}+\frac{\sqrt{2(n+1)(n-2)}}{4}. 
\]
If $F(1)=F(n)<0$, $F((n+1)/2)<0$, and $F(\alpha'')=F(\beta'')<0$, then 
$F(k)<0$ for all $1\leq k \leq n$. Indeed for $n \geq 2$, 
\begin{align*}
&F(1)=F(n) \\
& \qquad = -(n-1)(3n^4+27n^3+10n^2+26n+60+(15n^3+15n^2+60)\sqrt{n+1})<0, \\
&F(\alpha'' )=F(\beta'' ) \\
& \qquad =  -\frac{1}{16} 
n(n+1)^2(5n^3+63n^2+68n-16+(30n^2+60n)\sqrt{n+1})<0, \\
&F\left( \frac{n+1}{2} \right) =  -\frac{1}{4} 
(n-1)(n+1)^2(7n^2+59n+60+(30n+60)\sqrt{n+1})<0. 
\end{align*}
Therefore $\sum_{k \in J}\sum_{x \in v_k^{A_n}} w_k r_k f(x) <0$ for any 
$J$, $R$ and $w$. 
\end{proof}

\begin{thrm} \label{thm:A_n}
$\mathcal{X}(A_n,J)$ is not a tight Euclidean $t$-design
except for the sets
in Table 1.
\end{thrm}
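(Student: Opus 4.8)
The plan is to combine the reduction provided by Corollary~\ref{main} with a two-stage argument: first pin down the finitely many pairs $(n,J)$ whose cardinality can meet a Fisher--M\"oller bound, then decide for each of them whether $\mathcal{X}(A_n,J)$ can actually be realized as a tight design on the required number of spheres. To begin, one shrinks the range of $t$: since $|\mathcal{X}(A_n,J)|=\sum_{k\in J}\binom{n+1}{k}\geq n+1$, the bounds for $t\leq 1$ are never met, and by Theorem~\ref{A_n} no $\mathcal{X}(A_n,J)$ is a Euclidean $6$-design, hence none is a $t$-design for $t\geq 6$; so it suffices to treat $t\in\{2,3,4,5\}$. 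Because the corner vectors are unit vectors, $0\notin\mathcal{X}(A_n,J)$ and $\varepsilon_S=0$; writing $p$ for the number of distinct radii among the $r_k$ (so $1\leq p\leq |J|$), Theorem~\ref{ft_bound} together with the dimension formulas of Section~\ref{sect2} reduce tightness to the single equation
\[
\sum_{k\in J}\binom{n+1}{k}=b(n,p,t),
\]
where, for $t\leq 5$, $b(n,p,t)$ equals $n+1$ ($t=2$), $2n$ ($t=3$), $\tfrac{n(n+3)}{2}$ or $\tfrac{(n+1)(n+2)}{2}$ ($t=4$, according as $p=1$ or $p\geq 2$), and $n(n+1)$ or $n^2+n+2$ ($t=5$, likewise).

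The second step is a combinatorial enumeration. For $3\leq k\leq n-2$ one has $\binom{n+1}{k}\geq\binom{n+1}{3}$, which exceeds all of the above (at most quadratic) bounds once $n\geq 8$; hence for $n\geq 8$ every feasible $J$ lies in $\{1,2,n-1,n\}$, and using $\binom{n+1}{1}=\binom{n+1}{n}=n+1$ and $\binom{n+1}{2}=\binom{n+1}{n-1}=\tfrac{n(n+1)}{2}$ the displayed equation collapses to a short list of polynomial identities in $n$ (for instance $(n+1)+\binom{n+1}{2}=\tfrac{(n+1)(n+2)}{2}$ and $2\binom{n+1}{2}=n(n+1)$ hold identically, whereas $\tfrac{n(n+1)}{2}=2n$ only for $n=3$, and so on). The cases $2\leq n\leq 7$ are handled directly, allowing also a ``middle'' index $k$ with $3\leq k\leq n-2$ (this is where, say, $J=\{1,3,5\}$ for $n=5$ enters). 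The output is a finite list of candidate triples $(n,J,p)$.

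The third step tests each candidate by means of Corollary~\ref{main}. The harmonic Molien series gives $\dim{\rm Harm}_i(\mathbb{R}^n)^{A_n}\leq 1$ for $i\leq 5$, spanned by the polynomial $f_i$ exhibited above, whose values $f_i(v_k)$ have been recorded in the factored forms $f_i(v_k)=(\text{a product of linear factors in }k)\cdot\phi_i(n)$. Thus ``$\mathcal{X}(A_n,J)$ is a Euclidean $t$-design'' amounts to the scalar system
\[
\sum_{k\in J} w_k r_k^{2j+i} N_k f_i(v_k)=0,\qquad i\in\{3,4,5\},\ 2j+i\leq t,
\]
in the unknowns $w_k,r_k>0$, which one solves (or shows unsolvable) from the signs of $\phi_i(n)$ and of the linear factors at integer $k$, together with the relation $v_k^{A_n}=-v_{n+1-k}^{A_n}$: when $k$ and $n+1-k$ both lie in $J$ the two orbits are antipodal, so the odd-degree equations become automatic, but comparing the instances $j$ and $j+1$ of such an equation then forces $r_k=r_{n+1-k}$ and lowers $p$. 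Matching the admissible values of $p$ against the tightness requirement of the first step selects precisely the entries of Table~1, which one checks are indeed tight, while every other pair $(n,J)$ fails either the cardinality equation or the positivity of the above system with the required number of distinct radii.

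The main obstacle is this third step. One part of it is that the sign analysis has to be carried out uniformly in $n$ from closed forms involving $\sqrt{n+1}$ and the quadratic roots of the numerators of $\phi_i(n)$ and of the factors of $f_i(v_k)$; here the factorizations, of the kind already used in the proof of Theorem~\ref{A_n}, are indispensable. The subtler part is keeping track of how many distinct radii are actually \emph{forced}: a cardinality-matching $J$ may still fail to be tight, either because the design equations are consistent only with all radii equal (so the applicable bound is the smaller $p=1$ one, which its size overshoots), or because they force a coincidence of spheres incompatible with the $p\geq 2$ bound that its size matches. Organizing this case analysis for $t=4$ and $t=5$ is the technical heart of the argument.
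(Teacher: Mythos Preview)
Your plan is essentially the paper's own: restrict to $t\le 5$ via Theorem~\ref{A_n}, match $\sum_{k\in J}\binom{n+1}{k}$ against the Fisher--M\"oller bounds, and decide each surviving candidate by feeding the precomputed signs of $f_3(v_k),f_4(v_k),f_5(v_k)$ into Corollary~\ref{main}; the paper simply interleaves these steps (first the sign of $f_3$ forces $J$ to straddle $(n+1)/2$, then cardinality pins $J$ to $\{1,n-1\}$ or $\{2,n\}$, then the location of the roots $\alpha,\beta$ of $f_4(v_k)$ cuts $n$ down to $4,5,6$) and writes out only the case $t=4$, $p=2$. One phrasing to tighten: when $k$ and $n+1-k$ both lie in $J$ the set $r_k v_k^{A_n}\cup r_{n+1-k}v_{n+1-k}^{A_n}$ is not antipodal until $r_k=r_{n+1-k}$, so the odd-degree equations are not automatic \emph{a priori}---they are precisely what force that radius coincidence, as your very next clause correctly uses.
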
 
\begin{proof}
We prove only the classification of tight Euclidean $4$-designs on 
two concentric spheres obtained from $\mathcal{X}(A_n,J)$. 
The other cases can be proved by a similar way.

$f_3(v_k)$ and $\phi_3(n)$ are defined as above. 
Since $\phi_3(n)>0$ for $n>1$, 
$f_3(v_k)=0$ if and only if $n\equiv 1 \pmod 2$ and $k=(n+1)/2$. 
Clearly $f_3(v_k)>0$ for $k<(n+1)/2$, and $f_3(v_k)<0$ for $k>(n+1)/2$. 
Therefore $J$
must contain $k_1$ and $k_2$ such that $k_1<(n+1)/2<k_2$ by 
Corollary \ref{main}.  

The size of a tight Euclidean $4$-design on two concentric spheres is $(n+1)(n+2)/2$. 
By noting that $(n+1)(n+2)/2<N_3=N_{n-2}$ for $n > 5$, 
we can determine $J=\{1,n-1\}$, (or equivalently $J=\{2,n\}$) for any $n>2$, or $J=\{1,2\}$ for $n=2$. 
For $n=2$, we can obtain tight Euclidean $4$-designs on two concentric spheres as in Table 1. 

$f_4(v_k)$, $\phi_4(n)$, $\alpha$, and $\beta$ are defined as above. 
Note that $\phi_4(n)\ne 0$, $\alpha>1$, and $\beta<n$ for any integer $n>2$. 
Therefore $1<\alpha<n-1<\beta$, (or equivalently $\alpha<2<\beta<n$) holds   
by Corollary \ref{main}. 
The integers satisfying the condition are only $n=4,5,6$. 
For $n=4,5,6$, we can obtain tight Euclidean $4$-designs on two concentric spheres as in Table 1.  
\end{proof}
\begin{table} 
\begin{center}
\begin{tabular}{|c|c|c|c|c|c|}
\hline
$|R|$& $t$ & $n$ & $J$     & $r_i$ & $w_i$ \\
\hline
$1$& $2$ & any & $\{1\}$   & $r_1=1$  & $w_1=1$ \\
 &$2$ & any & $\{n\}$   &  $r_n=1$    & $w_n=1$ \\
&$3$ & $3$ & $\{2\}$   &  $r_2=1$     & $w_2=1$ \\
&$5$ & $2$ & $\{1,2\}$ &  $r_1=r_2=1$ & $w_1=w_2=1$ \\
&$5$ & $7$ & $\{2,6\}$  & $r_2=r_6=1$ & $w_2=w_6=1$ \\
\hline
$2$ & $4$ & $2$ &$\{1,2\}$ & $r_1=1$, $r_2\ne 1$ & $w_1=1$, 
$w_2=\frac{1}{r_2^3}$ \\
    & $4$ & $4$ &$\{1,3\}$ &$r_1=1$, $r_3=\frac{1}{\sqrt{6}}$& $w_1=1$, 
$w_3=27$  \\   
    & $4$ & $4$ &$\{2,4\}$ &$r_4=1$, $r_2=\frac{1}{\sqrt{6}}$& $w_4=1$, 
$w_2=27$  \\    
    & $4$ & $5$ &$\{1,4\}$ &$r_1=1$, $r_4=\sqrt{\frac{8}{5}}$& $w_1=1$, 
$w_4=\frac{1}{2}$  \\  
    & $4$ & $5$ &$\{2,5\}$ &$r_5=1$, $r_2=\sqrt{\frac{8}{5}}$& $w_5=1$, 
$w_2=\frac{1}{2}$  \\ 
    & $4$ & $6$ &$\{1,5\}$ &$r_1=1$, $r_5=\sqrt{15}$& $w_1=1$, 
$w_5=\frac{1}{81}$  \\  
    & $4$ & $6$ &$\{2,6\}$ &$r_6=1$, $r_2=\sqrt{15}$& $w_6=1$, 
$w_2=\frac{1}{81}$  \\  
    & $5$ & $3$ &$\{1,2,3\}$& $r_1=r_3=1$, $r_2\ne 1$ & $w_1=w_3=1$, 
$w_2=\frac{9}{8r_2^4}$\\
 & $5$ & $5$ &$\{1,3,5\}$& $r_1=r_5=1$, $r_3 \ne 1$ & $w_1=w_5=1$, 
$w_3=\frac{27}{25r_2^4}$\\   
\hline 
\end{tabular}
\caption{Tight Euclidean $t$-designs from $\mathcal{X}(A_n,J)$ }
\end{center}
\end{table}

\subsection{Group $B_n$}
\quad \\
\textbf{Dynkin diagram}\begin{center}
\unitlength.015in
\begin{picture}(180,20)
\put( 30, 10){\circle*{5}}
\put( 30, 10){\line(1,0){30}}
\put( 60, 10){\circle*{5}}
\put( 60, 10){\line(1,0){30}}
\put( 90, 10){\circle*{5}}
\put( 90, 10){\line(1,0){15}}
\put( 115, 8 ){$\cdots$}
\put( 135, 10){\line(1,0){15}}
\put( 150, 10 ){\circle*{5}}
\put( 150, 12){\line(1,0){30}}
\put( 150, 8){\line(1,0){30}}
\put( 180, 10 ){\circle*{5}}

\put( 27, 20){$\alpha_1$}
\put( 57, 20){$\alpha_2$}
\put( 87, 20){$\alpha_3$}
\put( 147, 20){$\alpha_{n-1}$}
\put( 177, 20){$\alpha_{n}$}
\put( 162, -3){4}

\end{picture}
\end{center}

\noindent
\textbf{Exponents} \\
$1,3, \ldots, 2n-1$

\noindent
\textbf{Fundamental roots} \\
$\alpha_{i}:=e_i-e_{i+1}$ for $1\leq i \leq n-1$ and $\alpha_n:=\sqrt{2}e_n$.

\noindent
\textbf{Corner Vectors} \\
$v_k=[1/\sqrt{k}, \ldots ,1/\sqrt{k},0, 
\ldots , 0]$, where $v_k$ has $k$ coordinates equal to $1/\sqrt{k}$.

\noindent
\textbf{Reflection group} \\
The reflection group $B_n \subset O(\mathbb{R}^n)$ is generated by the 
following: 
\[
r(\alpha_i)=\left[
{}^te_1, 
\cdots 
{}^te_{i-1}, 
{}^te_{i+1}, 
{}^te_{i},  
{}^te_{i+2}, 
\cdots ,
{}^te_n    
\right] \text{ for $1\leq i \leq n-1$},
\]
\[
r(\alpha_n)=\left[
{}^te_1, 
\cdots 
{}^t e_{n-1},
-{}^te_n    
\right].
\]

\noindent
\textbf{Orbits}\\ The orbit $v_k^{B_n}$ is the set of vectors with exactly 
$k$ nonzero coordinates equal to $\pm 1/\sqrt{k}$. Note that $v_k^{B_n}$ 
is antipodal and $N_k=2^k \binom{n}{k}$. 

\noindent
\textbf{Harmonic Molien series}
\begin{eqnarray*}
\frac{1}{(1-t^4)(1-t^6)\cdots (1-t^{2n})}=
\left\{\begin{array}{ll}
1+t^4+t^8+ \cdots, & \qquad \text{ if } n = 2, \\
1+t^4+t^6+ \cdots, & \qquad \text{ if } n\geq 3. 
\end{array} \right.
\end{eqnarray*}

\noindent
\textbf{$G$-invariant harmonic polynomials}\\
1. {\it Degree $4$.} \\ 
Note that $\dim({\rm Harm}_4(\mathbb{R}^n)^{B_n})=1$ for any $n \geq 2$. 
The following is a $B_n$-invariant harmonic polynomial of degree $4$: 
\begin{eqnarray*}
f_4={\rm sym}(x_1^4)- \frac{6}{n-1}{\rm sym}(x_1^2 x_2^2).
\end{eqnarray*}
2. {\it Degree $6$.} \\ 
Note that $\dim({\rm Harm}_6(\mathbb{R}^2)^{B_2})=0$ and $\dim({\rm 
Harm}_6(\mathbb{R}^n)^{B_n})=1$ for any $n \geq 3$. The following is a 
$B_n$-invariant harmonic polynomials of degree $6$: 
\begin{eqnarray*}
f_6={\rm sym}(x_1^6)- \frac{15}{n-1}{\rm sym}(x_1^2 x_2^4)+ 
\frac{180}{(n-1)(n-2)}{\rm sym}(x_1^2 x_2^2 x_3^2).
\end{eqnarray*}
\textbf{Substitute $v_k$ for $G$-invariant harmonic polynomials}\\
1. {\it Degree $4$.}  
\begin{eqnarray*}
f_4(v_k)=\frac{1}{k}\left(1-3 \frac{k-1}{n-1} \right).
\end{eqnarray*}
2. {\it Degree $6$.} 
\begin{eqnarray*}
f_6(v_k)=\frac{1}{k^2}\left(1-15 \frac{k-1}{n-1}+30 
\frac{(k-1)(k-2)}{(n-1)(n-2)} \right).
\end{eqnarray*}

\begin{thrm}[\cite{Bajnok}] \label{baj1}
There is no choice of $R$, $J$, and $w$ for which $(\mathcal{X}(B_n,J),w)$ 
is a Euclidean $8$-design. 
\end{thrm}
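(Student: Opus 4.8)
The plan is to reuse the strategy behind the proof of Theorem~\ref{A_n}: produce one harmonic polynomial of degree $8$ whose values at the corner vectors $v_1,\dots,v_n$ are all nonzero of the same sign, and then contradict Corollary~\ref{main}. Since $B_n$ acts by signed permutations, I would take the polynomial inside ${\rm Harm}_8(\mathbb{R}^n)^{B_n}$, i.e. symmetric in $x_1^2,\dots,x_n^2$; for such an $f$, Lemma~\ref{constant} gives $\sum_{x\in v_k^{B_n}}f(x)=N_k f(v_k)$, so it suffices to control the $n$ numbers $f(v_1),\dots,f(v_n)$.

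The concrete candidate is
\[
f_8:={\rm sym}(x_1^8)-\frac{28}{n-1}\,{\rm sym}(x_1^6x_2^2)+\frac{70}{n-1}\,{\rm sym}(x_1^4x_2^4),
\]
built by the same recipe as the $f_4,f_6$ already recorded for $B_n$. The first step is to check $f_8\in{\rm Harm}_8(\mathbb{R}^n)^{B_n}$ for every $n\ge2$: it is visibly $S_n$-invariant and even in each variable, hence $B_n$-invariant, and $\Delta f_8=0$ is a one-line Laplacian computation of the same kind that pins down $f_4$ and $f_6$ — the coefficients $-28/(n-1)$ and $70/(n-1)$ are exactly what is forced by $\Delta f_8=0$ once one restricts to the monomials ${\rm sym}(x_1^8),{\rm sym}(x_1^6x_2^2),{\rm sym}(x_1^4x_2^4)$. (As a sanity check, for $n=2$ this $f_8$ equals ${\rm Re}\bigl((x_1+ix_2)^8\bigr)$, which spans the one-dimensional space ${\rm Harm}_8(\mathbb{R}^2)^{B_2}$.)

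The second step is the evaluation at $v_k$. As $v_k$ has exactly $k$ coordinates equal to $1/\sqrt{k}$ and the rest $0$, counting the monomials that survive gives ${\rm sym}(x_1^8)(v_k)=k^{-3}$, ${\rm sym}(x_1^6x_2^2)(v_k)=(k-1)k^{-3}$ and ${\rm sym}(x_1^4x_2^4)(v_k)=\frac12(k-1)k^{-3}$, hence
\[
f_8(v_k)=\frac{1}{k^{3}}\left(1+\frac{7(k-1)}{n-1}\right),
\]
which is strictly positive for every $k\in\{1,\dots,n\}$. The third step is the conclusion: if $(\mathcal{X}(B_n,J),w)$ were a Euclidean $8$-design for some nonempty $J$, Corollary~\ref{main} (used with $i=8$, $j=0$) would produce $w_k>0$ and $r_k>0$ with $\sum_{k\in J}w_k r_k^{8}N_k f_8(v_k)=0$; but every summand is strictly positive, a contradiction, which is the assertion of the theorem.

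In this route the only genuinely nonroutine point — the ``main obstacle'' — is to pick the right degree-$8$ invariant harmonic. One has to notice that the lower-degree invariants do not suffice: $f_4(v_k)=k^{-1}\bigl(1-3(k-1)/(n-1)\bigr)$, and the analogous $f_6$, take both signs as $k$ runs over $\{1,\dots,n\}$ (consistent with the existence of Euclidean designs of smaller degree in $\mathcal{X}(B_n,J)$), so one really must go up to degree $8$ and isolate the particular combination for which the numerator of $k^{3}f_8(v_k)$ is the affine function $1+7(k-1)/(n-1)$, whose positivity on $1\le k\le n$ is immediate. Everything after that choice is routine verification — the harmonicity of $f_8$ and the three monomial evaluations.
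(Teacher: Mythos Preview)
Your proposal is correct and is precisely the argument the paper has in mind: the Remark following Theorem~\ref{baj2} says Theorems~\ref{baj1} and~\ref{baj2} can be proved via $B_n$-invariant harmonic polynomials, and the paper carries out exactly this computation (same $f_8$, same evaluation $f_8(v_k)=k^{-3}\bigl(1+7(k-1)/(n-1)\bigr)>0$) in the proof of the parallel $D_n$ statement, Theorem~\ref{thm:D_n}. Nothing is missing.
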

\begin{thrm}[\cite{Bajnok}] \label{baj2}
$\mathcal{X}(B_n,J)$ is not a tight Euclidean $t$-design
except for the sets in Table 2.
\end{thrm}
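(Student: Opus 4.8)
\textbf{Proof proposal for Theorem \ref{baj2}.}

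The plan is to mirror the structure already used in the proofs of Theorems \ref{A_n} and \ref{thm:A_n}, but now for the reflection group $B_n$ with corner vectors $v_k=[1/\sqrt k,\ldots,1/\sqrt k,0,\ldots,0]$. The backbone is Corollary \ref{main}: $\mathcal{X}(B_n,J)$ is a Euclidean $t$-design if and only if for every $G$-invariant harmonic polynomial $f$ of degree $i$ with $1\le i\le t$ (and every admissible $j$) the linear relation $\sum_{k\in J}w_k r_k^{2j+i}N_k f(v_k)=0$ holds, where $N_k=2^k\binom nk$. Since the harmonic Molien series shows $\dim({\rm Harm}_i(\mathbb{R}^n)^{B_n})\le 1$ for $i\le 7$ (the relevant degrees here being $4$ and $6$, with degrees $1,2,3,5,7$ contributing nothing), the design condition up to degree $7$ reduces to a short list of scalar equations involving $f_4(v_k)$ and $f_6(v_k)$, whose explicit values are recorded just above. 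By Theorem \ref{baj1} no Euclidean $8$-design arises, so the tight designs to be classified are those of degrees $t\le 7$: namely tight $2$-, $3$-, $4$-, $5$-, $6$-, $7$-designs.

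First I would split by parity of $t$ and by whether $0\in\mathcal{X}(B_n,J)$ (which never happens here since every $v_k^{B_n}$ avoids the origin, so $\varepsilon_S=0$ and $p'=p=|R|$), and write out the Fisher-type / M\"oller lower bound from Theorem \ref{ft_bound} as an explicit integer-valued function of $n$ and $|R|$. Then I would compare this bound with the available cardinalities: any candidate $J$ must satisfy $\sum_{k\in J}N_k=\sum_{k\in J}2^k\binom nk$ equal to that bound, and this Diophantine constraint — together with the sign/vanishing analysis of $f_4(v_k)$ and $f_6(v_k)$ forced by Corollary \ref{main} — pins down the finitely many possibilities. Concretely, $f_4(v_k)=\frac1k(1-3\frac{k-1}{n-1})$ vanishes exactly when $k=\frac{n+2}{3}$ and changes sign there, so for a design of degree $\ge 4$ on a single sphere one needs $k=\frac{n+2}{3}\in\mathbb{Z}$, while for two spheres $J$ must straddle this value; similarly $f_6(v_k)$ is a quadratic in $k$ whose roots dictate the degree-$6$ obstructions. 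Matching these sign patterns against the cardinality equation yields the rows of Table 2, and for each surviving $(n,J,R)$ one checks directly that the remaining linear equations can be solved with positive weights, exhibiting the explicit $r_k,w_k$.

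The main obstacle I anticipate is the bookkeeping in the cardinality comparison: unlike the $A_n$ case where $N_k=\binom{n+1}{k}$ grows in a familiar binomial way, here $N_k=2^k\binom nk$ grows faster, so one must argue carefully that for $n$ beyond a small threshold the smallest non-extreme orbit already overshoots the tight bound (an inequality of the shape $2^3\binom n3 > $ [bound for $|R|=2$, $t=4$], analogous to the step ``$N_3=N_{n-2}>(n+1)(n+2)/2$ for $n>5$'' in the proof of Theorem \ref{thm:A_n}), thereby restricting $J$ to contain only the ``thin'' orbits $v_1^{B_n}$ and $v_n^{B_n}$ (plus possibly $v_{n-1},v_2$), and then only finitely many $n$ survive the root-integrality conditions. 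Since this is exactly Bajnok's original argument and the statement is quoted from \cite{Bajnok}, the intent here is only to record that it fits the uniform framework of Corollary \ref{main}; the detailed verification of each table entry is the same routine polynomial substitution carried out above for the invariant harmonics of degrees $4$ and $6$. $\Box$
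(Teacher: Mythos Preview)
Your proposal is correct and matches the paper's treatment: the paper does not actually prove Theorem~\ref{baj2} but cites it from \cite{Bajnok} and then notes in the subsequent remark that Theorems~\ref{baj1} and~\ref{baj2} can equally be obtained via the $B_n$-invariant harmonic polynomials, i.e., precisely the framework of Corollary~\ref{main} that you outline. Your sketch---sign analysis of $f_4(v_k)$ and $f_6(v_k)$, cardinality comparison of $\sum_{k\in J}2^k\binom nk$ against the Fisher/M\"oller bounds, and direct verification of the surviving cases---is exactly the analogue of the $A_n$ argument (Theorems~\ref{A_n}, \ref{thm:A_n}) and is what the paper's remark intends.
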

\begin{remark}
We can also prove Theorems \ref{baj1} and \ref{baj2} by the $B_n$-invariant 
harmonic polynomials. 
\end{remark}

\begin{table}
\begin{center}
\begin{tabular}{|c|c|c|c|c|c|}
\hline
$|R|$& $t$ & $n$ & $J$     & $r_i$ & $w_i$ \\
\hline
$1$  & $3$ & any &$\{1\}$  & $r_1=1$ & $w_1=1$ \\
	 & $3$ & $2$ & $\{2\}$ & $r_2=1$ & $w_2=1$ \\
	 & $7$ & $2$ &$\{1,2\}$& $r_1=r_2=1$& $w_1=w_2=1$\\
\hline
$2$  & $5$ & $2$ &$\{1,2\}$& $r_1=1$, $r_2 \ne 1$& $w_1=1$, 
$w_2=\frac{1}{r_2^4}$\\
	 & $5$ & $3$ &$\{1,3\}$& $r_1=1$, $r_3 \ne 1$&$w_1=1$, 
$w_3=\frac{9}{8r_2^4}$\\
	 & $7$ & $4$ &$\{1,2,4\}$&$r_1=r_4=1$, $r_2\ne 1$ &$w_1=w_4=1$, 
$w_2=\frac{1}{r_2^6}$\\
\hline
$3$  & $7$ & $3$ &$\{1,2,3\}$& $r_1=1$, 
$r_2=\sqrt{\frac{2r_3^2}{5r_3^2-3}}$, ($r_i\ne r_j$) & $w_1=1$, $w_2=\frac{4}{5r_2^6}$, $w_3=\frac{27}{40 
r_3^6}$\\
\hline
\end{tabular}
\caption{Tight Euclidean $t$-designs from $\mathcal{X}(B_n,J)$ }
\end{center}
\end{table}

\subsection{Group $D_n$}
\quad \\
\textbf{Dynkin diagram}\begin{center}
\unitlength.015in
\begin{picture}(180,20)
\put( 30, 10){\circle*{5}}
\put( 30, 10){\line(1,0){30}}
\put( 60, 10){\circle*{5}}
\put( 60, 10){\line(1,0){30}}
\put( 90, 10){\circle*{5}}
\put( 90, 10){\line(1,0){15}}
\put( 115, 8 ){$\cdots$}
\put( 135, 10){\line(1,0){15}}
\put( 150, 10 ){\circle*{5}}
\put( 150, 10){\line(5,-2){30}}
\put( 150, 10){\line(5,2){30}}
\put( 180, 22 ){\circle*{5}}
\put( 180, -2 ){\circle*{5}}

\put( 27, 20){$\alpha_1$}
\put( 57, 20){$\alpha_2$}
\put( 87, 20){$\alpha_3$}
\put( 142, 20){$\alpha_{n-2}$}
\put( 177, 30){$\alpha_{n-1}$}
\put( 177, -12){$\alpha_{n}$}

\end{picture}
\end{center}

\noindent
\textbf{Exponents} \\
$1,3, \ldots, 2n-3,n-1$

\noindent
\textbf{Fundamental roots} \\
$\alpha_{i}:=e_i-e_{i+1}$ for $1\leq i \leq n-1$ and 
$\alpha_n:=e_{n-1}+e_n$.

\noindent
\textbf{Corner Vectors} \\ $v_k=[1/\sqrt{k}, \ldots ,1/\sqrt{k},0, \ldots , 
0]$, where $v_k$ has $k$ coordinates equal to $1/\sqrt{k}$ for $1\leq k \leq 
n-2$. 
$v_{n-1}=[1/\sqrt{n},1/\sqrt{n},\ldots, 1/\sqrt{n},-1/\sqrt{n}]$ and 
$v_{n}=[1/\sqrt{n},1/\sqrt{n},\ldots, 1/\sqrt{n}]$.

\noindent
\textbf{Reflection group} \\
The reflection group $D_n \subset O(\mathbb{R}^n)$ is generated by the 
following: 
\[
r(\alpha_i)=\left[
{}^te_1, 
\cdots 
{}^te_{i-1}, 
{}^te_{i+1}, 
{}^te_{i},  
{}^te_{i+2}, 
\cdots ,
{}^te_n    
\right] \text{ for $1\leq i \leq n-1$},
\]
\[
r(\alpha_n)=\left[
{}^te_1, 
\cdots 
{}^t e_{n-2},
-{}^te_n,
-{}^te_{n-1}    
\right].
\]

\noindent
\textbf{Orbits}\\
For $1 \leq k \leq n-2$, $v_k^{D_n}=v_k^{B_n}$. 
The orbit $v_n^{D_n}$ (resp.\ $v_{n-1}^{D_n}$) consists of the vectors 
$\{\pm 1/\sqrt{n}\}^n$ with an even (resp.\ odd) number of negative 
coordinates. 
Note that 
$v_{n}^{D_n} = -v_{n-1}^{D_n}$ for odd $n$, and both $v_{n}^{D_n}$ and 
$v_{n-1}^{D_n}$ 
are antipodal for even $n$. Furthermore, 
$|N_{n-1}|=|N_{n}|=2^{n-1}$. 

\noindent
\textbf{Harmonic Molien series}
\begin{eqnarray*}
\frac{1}{(1-t^4)(1-t^6)\cdots (1-t^{2n-2})(1-t^n)}=
\left\{\begin{array}{ll} 
1+2 t^4+t^6+ 3 t^8 + \cdots, &  \text{ if } n = 4, \\
1+t^4+t^5+t^6+2 t^8+ \cdots, &  \text{ if } n = 5, \\
1+t^4+2 t^6+2 t^8+ \cdots, &    \text{ if } n = 6, \\
1+t^4+t^6 + t^7+2t^8  \cdots, & \text{ if } n = 7, \\
1+t^4+t^6 +2t^8  \cdots, & \text{ if } n  \geq 8. \\
\end{array} \right.
\end{eqnarray*}

\noindent
\textbf{$G$-invariant harmonic polynomials}\\
1. {\it Degree $4$.} \\ 
Note that $\dim({\rm Harm}_4(\mathbb{R}^4)^{D_4})=2$ and $\dim({\rm 
Harm}_4(\mathbb{R}^n)^{D_n})=1$ for any $n \geq 5$. The following are 
$D_n$-invariant harmonic polynomials of degree $4$: \\
\begin{eqnarray*}
f_4={\rm sym}(x_1^4)- \frac{6}{n-1}{\rm sym}(x_1^2 x_2^2).
\end{eqnarray*}
The following is a $D_4$-invariant harmonic polynomial of degree $4$, which 
is linearly independent of $f_4$: 
\begin{eqnarray*}
f_{4,2} = x_1 x_2 x_3 x_4.
\end{eqnarray*}

\noindent
2. {\it Degree $5$.} \\
Note that $\dim({\rm Harm}_5(\mathbb{R}^5)^{D_5})=1$ and $\dim({\rm 
Harm}_5(\mathbb{R}^n)^{D_n})=0$ for any $n \ne 5$. The following is a 
$D_5$-invariant harmonic polynomial of degree $5$: 
\begin{eqnarray*}
f_5=x_1 x_2 x_3 x_4 x_5. 
\end{eqnarray*}

\noindent
3. {\it Degree $6$.} \\ 
Note that $\dim({\rm Harm}_6(\mathbb{R}^6)^{D_6})=2$ and $\dim({\rm 
Harm}_6(\mathbb{R}^n)^{D_n})=1$ for any $n \ne 6$.
The following is a $D_n$-invariant harmonic polynomial of degree $6$: \\
\begin{eqnarray*}
f_6={\rm sym}(x_1^6)- \frac{15}{n-1}{\rm sym}(x_1^2 x_2^4)+ 
\frac{180}{(n-1)(n-2)}{\rm sym}(x_1^2 x_2^2 x_3^2).
\end{eqnarray*}
The following is a $D_6$-invariant harmonic polynomial of degree $6$, which 
is linearly independent of $f_6$:
\begin{eqnarray*}
f_{6,2}(v_k)=x_1 x_2 x_3 x_4 x_5 x_6.
\end{eqnarray*}

\noindent
\textbf{Substitute $v_k$ for $G$-invariant harmonic polynomials}\\
1. {\it Degree $4$.} \\ 
For $1 \leq k \leq n-2$, 
\begin{eqnarray*}
f_4(v_k)=\frac{1}{k}\left(1-3 \frac{k-1}{n-1} \right).
\end{eqnarray*}
For $k = n-1,n$,
\begin{eqnarray*}
f_4(v_k)=-\frac{2}{n}.
\end{eqnarray*}
For $n=4$, 
\begin{eqnarray*}
f_{4,2}(v_1)=0,\qquad f_{4,2}(v_2)=0, \qquad 
f_{4,2}(v_3)=-\frac{1}{16},\qquad f_{4,2}(v_4)=\frac{1}{16}.
\end{eqnarray*}
\noindent
2. {\it Degree $5$.} \\
For $n=5$,
\begin{eqnarray*}
f_5(v_1)=0, \qquad f_5(v_2)=0, \qquad f_5(v_3)=0, \qquad 
f_5(v_4)=-\frac{1}{25 \sqrt{5}}, \qquad f_5(v_1)=\frac{1}{25 \sqrt{5}}.
\end{eqnarray*}

\noindent
3. {\it Degree $6$.} \\ 
For $1 \leq k \leq n-2$,  
\begin{eqnarray*}
f_6(v_k)=\frac{1}{k^2}\left(1-15 \frac{k-1}{n-1}+30 
\frac{(k-1)(k-2)}{(n-1)(n-2)} \right).
\end{eqnarray*}
For $k=n-1,n$, 
\begin{eqnarray*}
f_6(v_k)=\frac{16}{n^2}.
\end{eqnarray*}
For $n=6$, 
\begin{eqnarray*} 
&f_{6,2}(v_1)=0, \qquad f_{6,2}(v_2)=0, \qquad f_{6,2}(v_3)=0, \qquad 
f_{6,2}(v_4)=0,& \\
& f_{6,2}(v_5)=-\frac{1}{216}, \qquad f_{6,2}(v_6)=\frac{1}{216}.&
\end{eqnarray*}

\begin{thrm} \label{thm:D_n}
 There is no choice of $J$, $R$ and $w$ for which $(\mathcal{X}(D_n,J),w)$ 
is a Euclidean $8$-design. 
\end{thrm}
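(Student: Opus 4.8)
\emph{Proof sketch.} I would argue as in the proofs of Theorems~\ref{A_n} and~\ref{baj1}. By Corollary~\ref{main}, if $(\mathcal X(D_n,J),w)$ is a Euclidean $8$-design then, setting $s_k:=w_k r_k^8 N_k>0$, the relations of total degree $2j+i=8$ yield $\sum_{k\in J}s_k\,h(v_k)=0$ for every $h$ in the linear span of the $D_n$-invariant harmonic polynomials of degree $4$, $6$ and $8$ (coming from $f_4$ with $j=2$, $f_6$ with $j=1$, and the degree-$8$ invariants with $j=0$). Hence it suffices to exhibit a single such $h$ with $h(v_k)$ of one strict sign for all $k\in\{1,\dots,n\}$: then the displayed sum is a sum of nonzero terms of one sign, a contradiction.

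To implement this I would first record the relevant values. Besides $f_4$ and $f_6$ (whose values at $v_k$ are listed in the excerpt), let $f_8$ be the $D_n$-invariant harmonic part of $\sum_i x_i^8$ and $g_8$ the harmonic part of $\bigl(\sum_i x_i^4\bigr)^2$; both, like $f_4,f_6$, are symmetric in $x_1^2,\dots,x_n^2$, and for $n\ge 5$, $n\ne 8$ they span ${\rm Harm}_8(\mathbb R^n)^{D_n}$. Since $\|v_k\|=1$, a direct computation shows that $k^3 f(v_k)$ is a polynomial of degree $\le 3$ in $k$ for $1\le k\le n-2$; explicitly $k^3 f_4(v_k)=k^2\bigl(1-3\tfrac{k-1}{n-1}\bigr)$, $k^3 f_6(v_k)=k\bigl(1-15\tfrac{k-1}{n-1}+30\tfrac{(k-1)(k-2)}{(n-1)(n-2)}\bigr)$, and
\[
k^3 f_8(v_k)=1-\frac{28k}{n+12}+\frac{210\,k^2}{(n+12)(n+10)}-\frac{315\,k^3}{(n+12)(n+10)(n+6)},
\]
with $k^3 g_8(v_k)$ a further such polynomial. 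Moreover, being symmetric in $x_1^2,\dots,x_n^2$, each of $f_4,f_6,f_8,g_8$ is constant on the demicube orbits $v_{n-1}^{D_n}$ and $v_n^{D_n}$, its common value there being obtained from the same formula at $k=n$ (all coordinates square to $1/n$). So, uniformly for $k\in\{1,\dots,n-2\}\cup\{n\}$, $k^3 h(v_k)$ is the value at $k$ of one fixed polynomial of degree $\le 3$.

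The crux is that the four polynomials $k^3 f_4(v_k),k^3 f_6(v_k),k^3 f_8(v_k),k^3 g_8(v_k)$ span the whole $4$-dimensional space of polynomials of degree $\le 3$ in $k$: the ones coming from $f_4,f_6,g_8$ have vanishing constant term and are linearly independent (a short determinant computation, valid for all but finitely many $n$), while the one coming from $f_8$ has constant term $1$. Hence I can choose the combination $h$ with $k^3 h(v_k)\equiv 1$, i.e.\ $h(v_k)=k^{-3}>0$ for $1\le k\le n-2$ and $h(v_{n-1})=h(v_n)=n^{-3}>0$. For any $J$, $R$ and $w$ this forces $\sum_{k\in J}s_k\,h(v_k)>0\ne 0$, the desired contradiction; so the theorem holds for all $n\ge 5$ with $n\ne 8$, and for the finitely many remaining $n$ at which the independence degenerates one falls back on a Fisher-type size bound or a direct check.

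The main obstacle is the two exceptional dimensions $n=4$ and $n=8$. For $n=4$ no harmonic analysis is needed: $|\mathcal X(D_4,J)|\le\sum_{k=1}^{4}N_k=8+24+8+8=48<55\le\dim\mathcal P_4(S)$, so Theorem~\ref{ft_bound}(1) already forbids an $8$-design. For $n=8$ the harmonic $e_8=x_1\cdots x_8$ takes up the second degree-$8$ slot; its relation reads $-s_{n-1}+s_n=0$ (with the convention that an absent orbit contributes $0$), so every $J$ meeting exactly one of $\{n-1,n\}$ is excluded, while for $J\subseteq\{1,\dots,n-2\}$ one applies Theorem~\ref{baj1} (the $D_8$- and $B_8$-orbits coincide there) and for $J\supseteq\{n-1,n\}$ one must argue more carefully, combining $s_{n-1}=s_n$, the three-dimensional span of cubics from $f_4,f_6,f_8$, and the lower-degree relations (a Vandermonde-type argument in the squared radii) in the style of the proof of Theorem~\ref{A_n}. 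Dispatching this case and the uniform independence claim is where the real work lies; everything else is the one-line contradiction above.
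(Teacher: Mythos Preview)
Your overall strategy---produce a $D_n$-invariant harmonic $h$ with $h(v_k)>0$ for every $k$ and invoke Corollary~\ref{main}---is exactly the paper's, but you work far harder than necessary to reach it. The paper does not take linear combinations over degrees $4,6,8$ at all: it writes down the single degree-$8$ invariant
\[
f_8=\mathrm{sym}(x_1^8)-\frac{28}{n-1}\,\mathrm{sym}(x_1^2x_2^6)+\frac{70}{n-1}\,\mathrm{sym}(x_1^4x_2^4),
\]
computes $f_8(v_k)=\dfrac{1}{k^3}\Bigl(1+7\,\dfrac{k-1}{n-1}\Bigr)>0$ for $1\le k\le n-2$ and $f_8(v_{n-1})=f_8(v_n)=\dfrac{8}{n^3}>0$, and is done---uniformly for every $n\ge 4$, with no exceptional dimensions.

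By contrast, your route requires (i) verifying that the four cubics $k^3f_4(v_k),k^3f_6(v_k),k^3f_8(v_k),k^3g_8(v_k)$ are linearly independent for \emph{every} $n$ (you only assert ``all but finitely many''), and (ii) handling $n=8$ separately, a case you explicitly leave unfinished. Both of these gaps disappear once one realizes that a well-chosen $f_8$ is already strictly positive at every corner vector; there is no need to engineer the value $k^{-3}$ or to borrow lower-degree invariants. So your argument is salvageable in principle, but the paper's proof is a two-line computation where yours is an incomplete program.
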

\begin{proof}
The following is a $D_n$-invariant harmonic polynomial of degree $8$:
\begin{eqnarray*}
f_8={\rm sym}(x_1^8)-\frac{28}{n-1}{\rm sym}(x_1^2x_2^6)+\frac{70}{n-1}{\rm 
sym}(x_1^4 x_2^4).
\end{eqnarray*}
For $1\leq k \leq n-2$, 
\begin{eqnarray*}
f_8(v_k)=\frac{1}{k^3}\left( 1+7 \frac{k-1}{n-1} \right),
\end{eqnarray*}
and for $k=n-1,n$, 
\begin{eqnarray*}
f_8(v_k)=\frac{8}{n^3}. 
\end{eqnarray*}
Therefore $f_8(v_k)>0$ for all $k$.  
\end{proof}

\begin{thrm}
Assume $J$ contains $n$ or $n-1$.  Then
$\mathcal{X}(D_n,J)$ is not a 
tight Euclidean design
except for the sets in Table 3.  
\end{thrm}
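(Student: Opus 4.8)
\emph{Proof proposal.} The plan is to follow the scheme already used above for $A_n$ and $B_n$ (Theorems~\ref{A_n}, \ref{thm:A_n}, \ref{baj1}, \ref{baj2}): first bound the strength, then list the candidate index sets by matching $|X|=\sum_{k\in J}N_k$ against the Fisher-type inequalities of Theorem~\ref{ft_bound}, and finally test each surviving candidate against the invariant conditions of Corollary~\ref{main} using the values $f_i(v_k)$ tabulated above. By Theorem~\ref{thm:D_n} a tight design from $\mathcal{X}(D_n,J)$ has $t\le 7$, so only the strengths $t\in\{1,\dots,7\}$ need to be considered.

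First I would dispose of the low strengths. For $t\le 3$ every invariant condition of Corollary~\ref{main} is vacuous, since the Molien series of $D_n$ has no term of degree $\le 3$; hence every $\mathcal{X}(D_n,J)$ is automatically a Euclidean $3$-design, and one only has to ask which unions have $|X|$ equal to the relevant bound of Theorem~\ref{ft_bound}. Since $N_k=2^k\binom nk$ for $k\le n-2$ and $N_{n-1}=N_n=2^{n-1}$, under the hypothesis $n\in J$ or $n-1\in J$ the only solution is $n=4$, $J=\{3\}$ or $\{4\}$ (the two demicubes, each a cross-polytope), giving the tight $3$-designs listed in Table~3. For $4\le t\le 7$ I would next exploit that $f_4(v_{n-1})=f_4(v_n)=-2/n<0$ while $f_4(v_k)=\tfrac1k\bigl(1-3\tfrac{k-1}{n-1}\bigr)$ is positive precisely for $k<\tfrac{n+2}{3}$; so by Corollary~\ref{main} such a $J$ must also contain a small index ($k=1$ or $k=2$), and in particular $|J|\ge 2$. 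Then I would enumerate the admissible pairs $(n,J)$ by comparing $|X|$ with the appropriate bound of Theorem~\ref{ft_bound}; here $0\notin\mathcal{X}(D_n,J)$, so $\varepsilon_S=0$ and $p'=|R|$ in the displayed dimension formula. Because the hypothesis forces $2^{n-1}\le|X|$ while the bounds are polynomial in $n$, only finitely many $n$ survive, and for each of them a short subset search pins down $J$.

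Finally, for every surviving candidate I would substitute $v_k$ into the $D_n$-invariant harmonic polynomials and solve the linear system of Corollary~\ref{main} for $r_k,w_k>0$, keeping those that admit a positive solution. In general $f_4$ and $f_6$ suffice, but the exceptional dimensions contribute extra equations from extra invariants: $f_{4,2}$ when $n=4$, the degree-$5$ invariant when $n=5$, $f_{6,2}$ when $n=6$, and a degree-$7$ invariant when $n=7$. What survives is exactly Table~3.

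The hard part will be the bookkeeping in the candidate-enumeration step: the Fisher-type comparison has to be carried out separately for each number $|R|$ of concentric spheres and for each parity of $t$ (the odd case carrying the $-1$ correction and the hypothesis $0\in X$, which never occurs here), and one must be careful near the top of the range $k\in\{n-2,n-1,n\}$, where $N_{n-1}=N_n=2^{n-1}$ and $N_{n-2}=2^{n-2}\binom n2$ are small enough to compete with the bound for small $n$. The exceptional dimensions $n=4,5,6,7$ also need individual treatment, since the jump in $\dim{\rm Harm}_i(\mathbb{R}^n)^{D_n}$ there imposes additional linear constraints that can destroy otherwise-admissible candidates.
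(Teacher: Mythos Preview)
Your proposal follows exactly the approach the paper sketches in its one-sentence proof: bound the strength by Theorem~\ref{thm:D_n}, then argue as in Theorem~\ref{thm:A_n} using Corollary~\ref{main} together with the tabulated values $f_i(v_k)$ of the $D_n$-invariant harmonics. One slip to correct: the $n=4$ demicube orbits $J=\{3\}$ and $J=\{4\}$ that you (correctly) flag as tight spherical $3$-designs are \emph{not} actually listed in Table~3 as printed, so you should not claim they are.
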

\begin{proof}
By Theorem \ref{thm:D_n} and $D_n$-invariant harmonic polynomials, 
a proof is similar to that of Theorem \ref{thm:A_n}.
\end{proof}

\begin{table}
\begin{center}
\begin{tabular}{|c|c|c|c|c|c|}
\hline
$|R|$& $t$ & $n$ & $J$     & $r_i$ & $w_i$ \\
\hline
$1$  & $7$ & $8$ &$\{2,7\}$  & $r_2=r_7=1$ & $w_2=w_7=1$ \\
	 & $7$ & $8$ & $\{2,8\}$ & $r_2=r_8=1$ & $w_2=w_8=1$ \\
	
\hline
$2$  & $5$ & $6$ &$\{1,5\}$& $r_1=1$, $r_5 \ne 1$& $w_1=1$, 
$w_5=\frac{9}{8r_2^4}$\\
	 & $5$ & $6$ &$\{1,6\}$& $r_1=1$, $r_6 \ne 1$&$w_1=1$, 
$w_6=\frac{9}{8r_2^4}$\\
	 & $7$ & $4$ &$\{1,2,3,4\}$&$r_1=r_3=r_4=1$, $r_2\ne 1$ &$w_1=w_3=w_4=1$, 
$w_2=\frac{1}{r_2^6}$\\
\hline
\end{tabular}
\caption{Tight Euclidean $t$-designs from $\mathcal{X}(D_n,J)$, where 
$n\text{ or } n-1 \in J$ }
\end{center}
\end{table}
\begin{remark}
The tight Euclidean designs in Tables 1,2,3 are already known in 
\cite{Bajnok, Bannai-Bannai, Bannai-Bannai-Hirao-Sawa,
Etsuko, Etsuko2, Delsarte-Goethals-Seidel} 
\end{remark}

\begin{remark}
For each $G=F_4$, $H_3$, $H_4$, $E_6$, $E_7$, $E_8$,
by checking the cardinality of a union of several $v_k^G$,
we can prove $\mathcal{X}(G,J)$ is not a tight Euclidean design
except for known tight spherical designs \cite{Nozaki, Sali}. 
\end{remark}

\section{Concluding remarks}

In this paper we found some observations on
invariant cubature formulas and Euclidean designs
in connection with the Sobolev theorem.
First, we gave an alternative proof of
celebrated theorems by Xu on necessary and sufficient conditions
for the existence of cubature formulas with radial symmetry.
The new proof is much shorter and simpler
compared to the original one by Xu.
Thus
researchers in analysis will realize again the importance of the Sobolev theorem.
Moreover our proof gives a general interpretation
of the analytically-written conditions of Xu's theorems, and so
will promise researchers in algebra and combinatorics to be
more familiar with Xu's theorems.
Second
we extended the Neumaier-Seidel theorem to invariant Euclidean designs,
and thereby
classified tight Euclidean designs obtained from unions of the orbits of the corner vectors. 
The classification generalizes Bajnok's theorem
to other finite reflection groups beside
groups of type $B$.
Bajnok's theorem and results obtained in Section 4 may imply that
invariant cubature formulas of high degree could hardly exist.
Xu~\cite{Xu1} pointed out, however, that
the general Lie groups has been used for studying
cubature formulas in a different setting -- cubature rules on the fundamental
domain of the group, which are for exponential or trigonometric
functions -- and they yield Gaussian type cubature for algebraic
polynomials of very high orders; for instance see~\cite{Lie-Xu},~\cite{Patera-Moody}
for details.
We believe
this direction of research in analysis will also
motivate the study of cubature formulas
in other areas of mathematics.

\proof[Acknowledgements]
The authors started writing this paper during
their visit at the University of Texas at Brownsville, 2010,
under the sponsorship of the Japan Society for the Promotion of Science.
They would like to thank Oleg Musin for his hospitality.
The authors would also like to thank Akihiro Munemasa, Eiichi Bannai for
valuable comments to this work.
The second author would like to express his sincerest appreciation to Yuan Xu
for fruitful discussion about the content of Section 3 (\cite{Xu1}).

\end{document}